\documentclass[journal]{IEEEtran}
\usepackage[hyphens]{url}
\usepackage{amsmath}
\usepackage{dsfont}
\usepackage{mathrsfs}
\usepackage{amsfonts}
\usepackage{amssymb}
\usepackage{amsthm}
\usepackage{graphicx}
\usepackage{color}
\usepackage{epsfig} 
\usepackage{xcolor}
\usepackage{bbm}
\usepackage{arydshln}
\usepackage{multirow}
\usepackage{subcaption}
\captionsetup{font=footnotesize}
\newcommand{\tabspacer}{\rule[0pt]{0pt}{8pt}}
\usepackage[ruled,vlined]{algorithm2e}
\usepackage{float}
\usepackage{nccmath} 
\usepackage[normalem]{ulem}
\usepackage{mathtools}
\makeatletter
\newcommand{\ltxlabel}{\ltx@label}
\makeatother

\usepackage[
    style=ieee,
    doi=false,
    isbn=false,
    url=true,
    eprint=false,
    backend=bibtex,
    maxcitenames=2,
    mincitenames=1,
    natbib=true
    ]{biblatex}

\bibliography{collection}

\newcommand{\T}{\intercal} 

\Urlmuskip=0mu plus 1mu

\newtheorem{theorem}{Theorem}[section]

\newtheorem{prop}[theorem]{Proposition}

\newtheorem{asmp}[theorem]{Assumption}

\begin{document}
\title{Resilient Control of Platooning Networked\\ Robotic Systems via Dynamic Watermarking}

\author{
Matthew~Porter,
Arnav~Joshi,
Sidhartha~Dey,
Qirui Wu,
Pedro Hespanhol\\
Anil~Aswani,
Matthew~Johnson-Roberson,
and~Ram~Vasudevan
\thanks{This work was supported by a grant from Ford Motor Company via the Ford-UM Alliance under award N022977.}%
\thanks{M. Porter, A. Joshi, and R. Vasudevan are with the Department of Mechanical Engineering, University of Michigan, Ann Arbor, MI 48103 USA (e-mail:~matthepo@umich.edu;~arnavj@umich.edu;~ramv@umich.edu).}%
\thanks{S. Dey is with the Robotics Institute, University of Michigan, Ann Arbor, MI 48103 USA (e-mail:~siddey@umich.edu).}%
\thanks{Q. Wu is with the Department of Computer Engineering, University of Michigan, Ann Arbor, MI 48103 USA (e-mail:~wuqirui@umich.edu).}%
\thanks{A. Aswani and P. Hespanhol are with the Department of Industrial Engineering and Operations Research, University of California Berkeley, Berkeley, CA 94720 USA (e-mail:~aaswani@berkeley.edu;~pedrohespanhol@berkeley.edu).}%
\thanks{M. Johnson-Roberson is with the Department of Naval Architecture, University of Michigan, Ann Arbor, MI 48103 USA (e-mail:~mattjr@umich.edu).}%
}

\maketitle

\begin{abstract}
Networked robotic systems, such as connected vehicle platoons, can improve the safety and efficiency of transportation networks by allowing for high-speed coordination.
To enable such coordination, these systems rely on networked communications.
This can make them susceptible to cyber attacks. 
Though security methods such as encryption or specially designed network topologies can increase the difficulty of successfully executing such an attack, these techniques are unable to guarantee secure communication against an attacker. 
More troublingly, these security methods are unable to ensure that individual agents are able to detect attacks that alter the content of specific messages. 
To ensure resilient behavior under such attacks, this paper formulates a networked linear time-varying version of dynamic watermarking in which each agent generates and adds a private excitation to the input of its corresponding robotic subsystem. 
This paper demonstrates that such a method can enable each agent in a networked robotic system to detect cyber attacks. 
By altering measurements sent between vehicles, this paper illustrates that an attacker can create unstable behavior within a platoon.
By utilizing the dynamic watermarking method proposed in this paper, the attack is detected, allowing the vehicles in the platoon to gracefully degrade to a non-communicative control strategy that maintains safety across a variety of scenarios.
\end{abstract}
\section{Introduction}\label{sec:intro}
The adoption of intelligent transportation systems (ITS) promises to reduce road congestion while simultaneously improving road user safety.
Vehicle to vehicle (V2V) communications, in particular,  have been shown to improve efficiency and safety by enabling coordination \cite{Arem2006,englund2016}.
However, this reliance on outside communication introduces additional security concerns \cite{Gerla2015,Dominic2016,Amoozadeh2015}.
This paper considers the cooperative task of vehicle platooning and illustrates that a malicious agent can cause collisions by altering V2V communications within a platoon.
To address this challenge, we formulate a technique that uses a private excitation added to the input of each vehicle to identify whether an attack is taking place. 
The proposed method enables individual vehicles within a platoon to identify such attacks and gracefully degrade to a non-communicative control strategy that ensures safe, resilient behavior of the networked system.

\subsection{Vehicle Platooning}\label{sec:int_platooning}
Robotic vehicle platooning has been shown to decrease fuel consumption by reducing air drag \cite{bonnet2000fuel,Alam2010,liang2014fuel,mcauliffe2017fuel}, while also improving throughput on roads by reducing the occurrence of bottlenecks \cite{Chien1992Automatic,hall2005vehicle}. 
To achieve these performance improvements without creating phantom traffic jams or crashes \cite{Sugiyama2008,Flynn2009}, the longitudinal controller for these robotic platoons must be \emph{string stable} meaning that perturbations must be dampened by subsequent vehicles in the string \cite{herman1959traffic,chu1974decentralized,swaroop1996string,ploeg2013lp,feng2019string}.

For platoons that do not rely on V2V communication, string stability can be satisfied by maintaining a constant headway between vehicles in the platoon \cite{ioannou1993autonomous,swaroop1994comparision,zhou2004string}.
Unfortunately, since the headway is found by dividing the bumper-to-bumper following distance by the speed, this leads to
conservatively sized gaps between vehicles that grow as the speed increases.
Since the reduction in air drag becomes less prominent as the following distance increases, the methods that avoid utilizing V2V communication are unable to achieve all the potential energy efficiency benefits of robotic vehicle platooning. 

In contrast, for connected vehicles, the additional road user information can be used to adopt a constant spacing policy while still preserving string stability.
Even under limited V2V communication with just one or two neighboring vehicles \cite{stankovic2000decentralized,naus2010string,ploeg2013controller,swaroop1999constant,Seiler2004,Xiao2009}, energy efficiency and throughput can increase dramatically due to the reduction in following distance.
However, these communication channels introduce vulnerabilities to cyber attacks.

\subsection{Securing ITS}
The use of ad-hoc networks to facilitate collaborative actions, such as in impromptu platooning, results in additional security vulnerabilities.
Namely, the trustworthiness of communications received over the network is not guaranteed.
Therefore, to ensure the security of ITS, both the source of a communication and the communication's content must be verified.
The former has been widely studied as discussed in \citet{Hussain2020}, \citet{Bagga2020}, and references therein.
The latter has received a good deal of attention as well \cite{Al-Jarrah2019}.
However, methods for deriving content trustworthiness vary greatly. 
Some methods look for abnormalities in content without leveraging information on the underlying system \cite{Bezemskij2016,Kang2016,mehdi2017game}.
In contrast, \emph{model-aware} methods incorporate a system model in their trust analysis \cite{Bißmeyer2012,Hespanhol2018,Mo2012,Murguia2016CUSUMSensors,Umsonst2018}.
This allows for plausibility of the content of a communication to be evaluated in the context of the underlying system.

This paper focuses on one particular model-aware method, dynamic watermarking, in which each vehicle generates and adds a random excitation called a watermark to its inputs.
While each watermark is only known to the vehicle that generated it, the effect of each watermark propagates dynamically through the state of the system and into the measurements of surrounding vehicles.
Each vehicle observes the correlation of its watermark with the communicated measurements.
Then, using knowledge of the system dynamics, this correlation is compared with the expected correlation under normal operation to decide if the communication content is trustworthy.
In doing so, dynamic watermarking has been shown to detect a wider range of attack models than other methods while making few assumptions on the underlying system model \cite{weerakkody2016info,Weerakkody2017exposing,porter2019simulation}.

Originally, dynamic watermarking was derived for single input single output (SISO) linear time-invariant (LTI) systems \cite{Mo2009}, but extensions to 
multiple input multiple output (MIMO) LTI systems \cite{Satchidanandan2017,Hespanhol2017}, 
networked LTI systems \cite{Hespanhol2018}, 
switching LTI systems \cite{hespanhol2019sensor}, 
linear time-varying (LTV) systems \cite{porter2019ltvTAC,porter2020deception},
and simple non-linear systems have also been derived \cite{Ko2016,ko2019dynamic}.
Furthermore, detecting attacks on connected vehicle platoons using dynamic watermarking has also been previously studied \cite{Hespanhol2018}, \cite{ko2019dynamic}.
However, each case makes a strict assumption on the system model.
In particular, \citet{Hespanhol2018} assumes that each vehicle observes the entire platoon state and that the platoon maintains a constant velocity and road grade allowing it to be modeled as a networked LTI system. 
While the latter assumption is often satisfied in the current use case, the expansion of ITS will allow for impromptu platooning in less controlled environments where these assumptions are no longer valid.
More recently, \citet{ko2019dynamic} apply dynamic watermarking to a nonlinear platoon model.
This allows them to make guarantees of detection directly for the nonlinear system as opposed to a linearized approximation of the system.
However, their method does not readily extend beyond simple kinematic models, which do not account for non-negligible dynamic effects such as tire slip.
As described in \citet{kong2015kinematic}, to realistically model a vehicle's behavior one does need to include such dynamic tire slip terms.

In this paper, we propose a new method of dynamic watermarking developed around a networked LTV model. 
The proposed method requires fewer assumption than \citet{Hespanhol2017} and, unlike \citet{ko2019dynamic}, can be readily applied to platoons with non-negligible dynamic effects.

\subsection{Contributions}
The contributions of this paper are threefold.
First, we develop a notion of dynamic watermarking for networked LTV systems.
In particular, we derive statistical tests for detecting the presence of attacks. 
Second, we develop a mitigation strategy that allows a robotic vehicle platoon to gracefully degrade in the event of an attack. 
Third, networked LTV dynamic watermarking is applied to a simulated platoon of autonomous vehicles.
The simulation uses empirically found nonlinear dynamics to provide a realistic illustration of our proposed method.
Video of the simulated experiments can be found in \cite{Porter_vid_2020}.

The remainder of this paper is organized as follows.
Section \ref{sec:notation} overviews the notation used throughout the paper.
Section \ref{sec:platooning} outlines the problem of implementing attack detection for robotic vehicle platoons and illustrates how the system can be described using a networked LTV system.
Section \ref{sec:system} defines a general controller and observer structure and makes some assumptions on the system.
Section \ref{sec:tests} derives the statistical tests for distributed attack detection using the generalized networked LTV system.
Section \ref{sec:consider} provides practical methods for implementing the statistical tests on a real-world system.
Section \ref{sec:design} derives two robotic vehicle platoon controllers and observers that satisfy the conditions outlined in Section \ref{sec:system}.
In addition, a third controller and observer that does not use V2V communication is also derived to facilitate a graceful degradation of the platoon in the event of an attack.
Section \ref{sec:sim} illustrates the ability of the proposed algorithm to detect attacks using a simulated vehicle platoon.
We draw conclusions in Section \ref{sec:conclusion}.

\section{Notation}\label{sec:notation}
This section describes the notation used throughout the remainder of this paper.
We denote a variable $X$ for vehicle $i$ at step $n$ by $X_{i,n}$.
The zero matrix of size $i$ by $j$ is denoted $0_{i\times j}$.
The identity matrix of size $i$ is denoted $I_i$.
The block diagonal matrix with the blocks $M_1,~M_2,~M_3,\cdots$ on the diagonal is denoted diag$(M_1,~M_2,~M_3,\cdots)$.
The cardinality of a set $H$ is denoted $\text{card}(H)$.

The Wishart distribution with scale matrix $\Sigma$ and $i$ degrees of freedom is denoted $\mathcal{W}(\Sigma,i)$ \cite[Section 7.2]{anderson2003}.
The multivariate Gaussian distribution with mean $\mu$  and covariance $\Sigma$ is denoted $\mathcal{N}(\mu,\Sigma)$.
The matrix Gaussian distribution with mean $\mathcal{M}$, and parameters $\Sigma$ and $\Omega$ is denoted $\mathcal{N}(\mathcal{M},\Sigma,\Omega)$.
The expectation of a random variable $a$ is denoted $\mathds{E}[a]$.
\section{Robotic Vehicle
Platooning}\label{sec:platooning}
\begin{figure*}
    \centering
    \includegraphics[width=\textwidth]{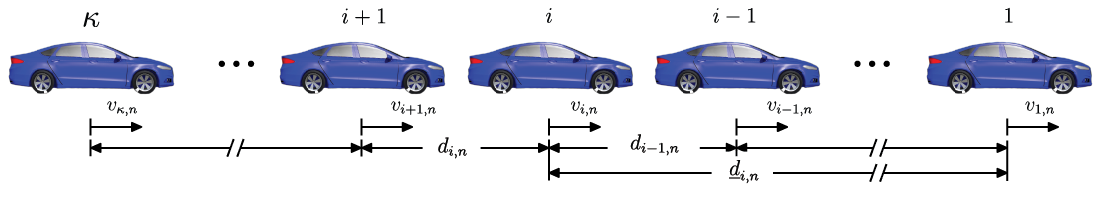}
    \caption{The platooning state at step $n$ is described by the velocity of each vehicle $v_{1,n},\hdots,v_{\kappa,n}$ and the distances between vehicles $d_{1,n},\hdots,d_{\kappa-1,n}$. The distance from a vehicle in an arbitrary position $i$ to the lead vehicle is $\underline{d}_{i,n}$. 
    }
    \label{fig:platoon}
\end{figure*}
This section illustrates the ability of an LTV system model to describe a robotic vehicle platoon.
The general approach is discussed and the resulting system is presented. A thorough derivation of the LTV system used in this work can be found in Appendix \ref{sec:derive}.
Whereas previous literature in platooning has often assumed a constant velocity, the use of an LTV system model can describe platooning with fewer assumptions.
Moreover, this system model provides a general structure for deriving networked LTV dynamic watermarking.

The task of vehicle platooning is often broken into two components, lane keeping and vehicle following \cite{Xu2018}.
The objective of lane keeping is to minimize the \emph{lateral error} i.e. the distance to the nearest point on the trajectory.
The task of vehicle following, as described in Section \ref{sec:int_platooning}, attempts to maintain a constant distance or a constant headway to the preceding vehicle in the platoon.
Note that the lead vehicle attempts to maintain a minimum headway when near other vehicles and a desired velocity otherwise.
In this work, we focus on the task of vehicle following since it requires V2V communication to reduce following distances between vehicles.
However, we also describe our lateral dynamics in Appendix \ref{sec:derive} for the sake of completeness.

We start with the empirically found non-linear model
\begin{align}
\begin{bmatrix} \dot{x}\\ \dot{y}\\ \dot{\psi}\\ \dot{v} \end{bmatrix}
&=
\begin{bmatrix} 
v\cos\psi - \dot{\psi}(c_8 + c_9v^2)\sin\psi \\ 
v\sin\psi + \dot{\psi}(c_8 + c_9v^2)\cos\psi \\ 
\frac{\tan(c_1\delta+c_2)v}{c_3 + c_4v^{2}} \\ 
c_5+c_6(v-v^{d}) + c_7(v-v^{d})^{2}
\end{bmatrix},
\label{eq:dynamics}\end{align}
where $(x,y)$ are the ground plane coordinates, $\psi$ is the vehicle heading, $v$ is the forward velocity, $\delta$ is the desired steering angle, $v^d$ is the desired velocity, and $c_1,\hdots,c_9$ are fitted constants which can be found in Table \ref{tab:dyn_const} in the appendix.
This model attempts to capture both the non-linear dynamics of the power-train and various dynamic effects such as tire slip.
Namely, the constants $c_1$ and $c_2$ are used to calibrate the steering angle, $c_4$ and $c_9$ are used to model the effect of tire slip on the angular and lateral velocities,  and $c_5$, $c_6$, and $c_7$ are used to approximate the drive train. 
Note that when $c_4=0$ and $c_9=0$ the equations for $\dot{x}$, $\dot{y}$, and $\dot{\psi}$ follow from \cite[Table 2.1]{rajamani2011vehicle} for a proper selection of $c_3$ and $c_4$.
The state of the longitudinal dynamics for a platoon of $\kappa$ vehicles, as illustrated in Figure \ref{fig:platoon}, consists of each vehicle's velocity $v_{1,n},\hdots,v_{\kappa,n}$, and the distances between subsequent vehicles $d_{1,n},\hdots,d_{\kappa-1,n}$.
For convenience we define
\begin{align}
    \underline{d}_{i,n}=\sum_{j=1}^{i-1}d_{i,n} \label{eq:distance_from_lead}
\end{align}
to denote the distance from the lead vehicle to vehicle $i$ in the platoon.
In this paper, the distance between vehicles is measured along the trajectory based on the center of each vehicle. 
This simplifies notation while still allowing for vehicle length to be accounted for in the platoons desired trajectory.

With the help of a few assumptions, we linearize and discretize these dynamics about a desired trajectory denoted $\{\bar{x}_{i,n},\bar{y}_{i,n},\bar{\psi}_{i,n},\bar{v}_{i,n},\bar{v}_{i,n}^d,\bar{\delta}_{i,n}\}_{n=0}^\infty$ for a step size of 0.05 seconds.
Following the linearization and discretization, the state vector and input vector take the form $x_n=[\Delta d_{1,n}~\cdots~\Delta d_{\kappa-1,n}~\Delta v_{1,n}~\cdots~\Delta v_{\kappa,n}]^\T$ and $u_{i,n}=\Delta v_{i,n}^d$ respectively
where $\Delta d_{i,n}=d_{i,n}-\bar{d}_{i,n}$, $\Delta v_{i,n} = v_{i,n}-\bar{v}_{i,n}$, and $\Delta v^d_{i,n}= v^d_{i,n}-\bar{v}^d_{i,n}$.
The resulting LTV system then satisfies
\begin{align}
    x_{n+1}&=A_nx_n+\sum_{i=1}^\kappa B_{i,n}u_{i,n}+w_n,\label{eq:state_update}
\end{align}
where $w_n$ is the process noise and matrices $A_{n}$ and $B_{i,n}$ are defined in Appendix \ref{sec:derive}.

Since the vehicle following task only seeks to maintain a desired velocity and spacing policy,  any longitudinal error with respect to a predefined trajectory will not be corrected by the controller.
As a result, the position of the platoon can drift along the trajectory.
Though this drift does not affect the derivations in this section, it does pose a practical problem which we resolve in Section \ref{sec:consider}.

We assume each vehicle is able to measure its own location, velocity, and the distance to the previous vehicle in the platoon. 
Using the location measurements, the lateral error and heading error are calculated for the lane keeping task.
For the vehicle following task the measurements for vehicle $i$ at step $n$ satisfy
\begin{align}
    y_{i,n}&=C_{i}x_n+z_{i,n},~i\in\{1,\hdots,\kappa\},
    \label{eq:measurement_model}
\end{align}
where $z_{i,n}$ is the measurement noise and $C_{i,n}$ takes one of two forms depending on the choice of controller.
In the first case, the leader measures its own velocity, while each of the other vehicles measures both their own velocity and the distance to the preceding vehicle.
In the second case, we assume that the first vehicle also communicates its location along the trajectory which can be used by other vehicles to calculate the distance to the lead vehicle.
However, we model this as the first vehicle also measuring the distance to all following vehicles. Both versions of the matrix $C_{i,n}$ can be found in Appendix \ref{sec:derive}.
\section{Networked LTV System}\label{sec:system}
This section provides necessary assumptions on the system noise, defines a generic controller and observer structure, and derives the closed loop dynamics.
This added structure allows us to formulate our proposed detection algorithm.

Consider an LTV system of $\kappa$ vehicles satisfying \eqref{eq:state_update} and \eqref{eq:measurement_model}, where $x_n, w_n \in \mathbb{R}^{p}$, $u_{i,n}\in\mathbb{R}^{q_i}$, $y_{i,n},z_{i,n}\in\mathbb{R}^{m_i}$.
We then make the following assumption.
\begin{asmp}
The process and measurement noise are mutually independent and Gaussian distributed such that
\begin{align}
    w_n&\sim\mathcal{N}(0_{p\times1},\Sigma_{w,n}),
\label{eq:process_noise}\\
    z_{i,n}&\sim\mathcal{N}(0_{m_i\times1},\Sigma_{z_i,n}).
\label{eq:measurement_noise}\end{align}
\end{asmp}
\noindent While the linearization of non-linear systems, generally does not result in Gaussian distributed noise, this assumption allows us to derive our proposed method using a statistical basis.
Furthermore, this assumption has not hindered the efficacy of LTV dynamic watermarking in non-networked settings \cite{porter2020deception}.

The communication from vehicle $j$ to vehicle $i$ at step $n$ is denoted $s_{(i,j),n}$ and takes the form 
\begin{align}
    s_{(i,j),n}=y_{j,n}+a_{(i,j),n},\label{eq:com_measurement}
\end{align}
where $a_{(i,j),n}$ is an additive attack on the communication channel.
In the platoon, this style of attack can take the form of a man in the middle, where the attacker intercepts and alters measurements being communicated between vehicles, or a malicious vehicle that sends out false measurements. 
Since vehicle $i$ already has its own measurement, the additive attack $a_{(i,i),n}$ is zero for $i\in\{1,\hdots,\kappa\}$.
To reduce computational overhead, particularly in larger platoons, some V2V communications may not be used.
We define the following set to ease later notation.
\begin{align}
    H\subseteq\left(\{1,\hdots,\kappa\}\times\{1,\hdots,\kappa\}\right),
\end{align}
where the communication from vehicle $j$ to vehicle $i$ is \emph{active} if $(i,j)\in H$. 
Moreover, we define the set
\begin{align}
    H_i=\{j|(i,j)\in H\},\label{eq:partial_H}
\end{align}
which contains the indices of vehicles that send measurements to vehicle $i$.

The input from vehicle $i$ comes in the form
\begin{align} \label{eq:control_input}
    u_{i,n}=K_{i,n}\hat{x}_{i,n}+e_{i,n},
\end{align}
where $K_{i,n}$ is a user defined control gain and $e_{i,n}\in\mathbb{R}^{q_i}$ is the watermark generated by vehicle $i$. 
The watermark is Gaussian distributed such that $e_{i,n}\sim\mathcal{N}(0_{q_i\times 1},\Sigma_{e_i})$ and is mutually independent with $z_{i,n},~w_n$. 
Vehicle $i$ observes a subset of the platoons state denoted $\hat{x}_{i,n}$ using the linear functional observer
\begin{fleqn}
\begin{equation}
    \hat{x}_{i,n+1}=M_{i,n}\hat{x}_{i,n}+N_{i}B_{i,n}e_{i,n}-\sum_{j\in H_i} L_{(i,j),n}s_{(i,j),n},\ltxlabel{eq:observer_update}
\end{equation}
\end{fleqn}
where the gain $L_{(i,j),n}$ and update matrix $M_i$ are user defined.
This observer follows a discrete time version of the observer first introduced in \citet{luenberger}.
However, since the watermark of other vehicles is unknown to vehicle $i$ their input is not included in \eqref{eq:observer_update}.

Next we derive the closed loop dynamics.
For ease of notation, we combine each vehicle's measurement noise and watermark such that
 \begin{align}
    z_{n}^\T&=\begin{bmatrix}z_{1,n}^\T & \cdots & z_{\kappa,n}^\T\end{bmatrix},\\
    \Sigma_{z,n}&=\text{diag}(\Sigma_{z_1,n},\hdots,\Sigma_{z_\kappa,n}),\\
    e_{n}^\T&=\begin{bmatrix}e_{1,n}^\T & \cdots & e_{\kappa,n}^\T\end{bmatrix},\\
    \Sigma_e&=\text{diag}(\Sigma_{e_1},\hdots,\Sigma_{e_\kappa}).
\end{align}
The additive attacks are also combined such that
\begin{align}
    a_{i,n}^\T&=\begin{bmatrix}a_{(i,1),n}^\T & \cdots & a_{(i,\kappa),n}^\T\end{bmatrix},\\
    a_{n}^\T&=\begin{bmatrix}a_{1,n}^\T & \cdots & a_{\kappa,n}^\T\end{bmatrix}.
\end{align}
Then we can write the closed loop system as 
\begin{align}
    \bar{x}_{n+1}&=
    \bar{A}_n
    \bar{x}_n+
    \bar{B}_ne_n+
    \begin{bmatrix}w_n\\0\\\vdots\\0
    \end{bmatrix}-\bar{L}_n
    \begin{bmatrix}0\\
    z_n+a_{1,n}\\\vdots\\z_n+a_{\kappa,n}\end{bmatrix}\label{eq:closed_loop},
\end{align}
where
$\bar{x}_n^\T=\begin{bmatrix}x_{n}^\T&\hat{x}_{1,n}^\T&\cdots&\hat{x}_{\kappa,n}^\T\end{bmatrix}$, and $\bar{A}_n,~\bar{B}_n,$ and $\bar{L}_n$ are found in Appendix \ref{sec:cl_mat}.
On the closed loop system we make the following assumption.
\begin{asmp}
Consider a closed loop system satisfying \eqref{eq:closed_loop}. If $a_{(i,j),n}=0_{m_i\times1}$ for all $(i,j)\in H$ and $n\in\mathbb{N}$, there exists positive constants $\eta_1,\eta_2\in\mathbb{R}$ such that
\begin{align}
    \|\mathds{E}[\bar{x}_n\bar{x}_n^\T]\|<\eta_1,\label{eq:bound1}\\
    \|\left(\mathds{E}[\bar{x}_n\bar{x}_n^\T]^{-1}\right)\|<\eta_2.\label{eq:bound2}
\end{align}
\end{asmp}
\noindent This assumption ensures that, despite being time varying, the covariance of the closed loop system and its inverse are well defined for all time.
For the system in this paper, \eqref{eq:bound1} holds since the controllers and observers we later derive render the system stable. 
Furthermore, we note that our system also satisfies \eqref{eq:bound2} as a result of the system noise propagating through the state vector.
\section{Networked LTV Tests}\label{sec:tests}
This section derives the statistical tests for networked LTV dynamic watermarking.
These tests utilize the difference between the observed state and the measured state called the \emph{measurement residual}.
The measurement residual for each $(i,j)\in H$ is formally defined as
\begin{align}\label{eq:residual}
    r_{(i,j),n}=U_{(i,j)}\hat{x}_{i,n}-W_{(i,j)}s_{(i,j),n},
\end{align}
where $r_{(i,j),n}\in\mathbb{R}^{p_{(i,j)}}$.
Furthermore, the matrices $U_{(i,j)}$ and $W_{(i,j),n}$ are defined such that
\begin{align}
    U_{(i,j)}N_i=W_{(i,j)}C_{j} \quad\forall (i,j)\in H.
\end{align}
Here, the matrices $U$ and $W$ are used to select the common state information between the observed state for vehicle $i$ and measurement from vehicle $j$.

The proposed detection scheme focuses on the covariance of the measurement residual $r_{(i,j),n}$ and its correlation with the watermark $e_{i,n-\rho_{(i,j)}-1}$ where $\rho_{(i,j)}$ is a user defined delay.
In the case that the delayed watermark is correlated with the common state information of the un-attacked measurement, 
an attacker cannot scale the true measurement signal without altering this correlation.
As a result, monitoring the watermarks correlation with the measurement residual (which is a function of the measurement) allows our proposed algorithm to detect attacks that scale or remove the true measurement.
To ensure a correlation between the delayed watermark and the common state information of the measurement, $\rho_{(i,j)}$ is selected to account for the time needed for the watermark to propagate through the system and into the measurement $s_{(i,j),n}$. 
For example, suppose vehicle $i$ adds $e_{i,n}$ to its desired velocity input $v_{i,n}^d$ at step $n$. 
The effect of $e_{i,n}$ will be seen in vehicle $i$'s velocity measurement and vehicle $i+1$'s distance measurement at step $n+1$.
However, vehicle $i+2$'s measurement will require additional time for the effect of $e_{i,n}$ to be seen in its measurement.
Though in general there is no guarantee that a given vehicles watermark will eventually propagate through the entire system, by our choice of controllers and corresponding communication structures, we guarantee the existence of a $\rho_{(i,j)}$ that ensures a correlation between $e_{i,n-\rho_{(i,j)}-1}$ and $W_{(i,j)}s_{(i,j),n}$ for each $(i,j)\in H$.
Moreover, given a particular $\rho_{(i,j)}$ we provide a sufficient condition for a non-zero correlation in Appendix \ref{sec:corr_prop}.

To monitor both the covariance of the measurement residual and its correlation with the watermark, we aim to observe the normalized outer product of the vector 
$[e_{i,n-\rho_{(i,j)}-1}^\T~r_{(i,j),n}^\T]^\T$,
which follows a Gaussian distribution when the platoon is un-attacked such that  
\begin{align}
    \begin{bmatrix}
    e_{i,n-\rho_{(i,j)}-1}\\
    r_{(i,j),n}
    \end{bmatrix}
    \sim\mathcal{N}\left(0_{(p_{(i,j)}+q_i)\times1},\Sigma_{(i,j),n}\right)\label{eq:combined_res}.
\end{align}
To this end, we start by defining a matrix normalizing factor similar to that of \citet{porter2019ltvTAC}
\begin{align}
    V_{(i,j),n}&=\Sigma_{(i,j),n}^{-\frac{1}{2}},
\end{align}
to create a new vector with constant covariance
\begin{align}
    \bar{r}_{(i,j),n}&=V_{(i,j),n}\begin{bmatrix}e_{i,n-\rho_{i,j}-1}\\r_{(i,j),n}\end{bmatrix}\sim\mathcal{N}(0_{(p_{(i,j)}+q_i)\times1},I).\label{eq:normalized_concat_res}
\end{align}
Next, we form the matrix
\begin{align}
    P_{(i,j),n}&=\begin{bmatrix}\bar{r}_{(i,j),n-\ell_{(i,j)}+1} & \cdots & \bar{r}_{(i,j),n}\end{bmatrix},\label{eq:res_window}
\end{align}
which, under the assumption of no attack, is distributed as
\begin{align}
    P_{(i,j),n}&\sim\mathcal{N}\left(0_{(p_{(i,j)}+q_i)\times\ell_{(i,j)}},I_{p_{(i,j)}+q_i},G_{(i,j),n}\right),
\end{align}
where
\begin{align}
    G_{(i,j),n}=\frac{\mathds{E}[P_{(i,j),n}^\T P_{(i,j),n}]}{p_{(i,j)}+q_i}.
\end{align}
Note that the values of $V_{(i,j),n}$ and $G_{(i,j),n}$ can be derived as functions of the dynamics and the covariances of the watermarks, process noise, and measurement noise.
However, in real word applications these covariances must be approximated.
To avoid compounding error, we derive methods for approximating both $V_{(i,j),n}$ and $G_{(i,j),n}$ in Section \ref{sec:consider}.
Finally, we form the matrix
\begin{align}
    S_{(i,j),n}=P_{(i,j),n}G_{(i,j),n}^{-1}P_{(i,j),n}^\T,\label{eq:S_mat}
\end{align}
which is distributed according to a Wishart distribution with scale matrix $I_{p_{(i,j)}+q_i}$ i.e.
\begin{align}
    S_{(i,j),n}\sim\mathcal{W}(\ell_{(i,j)},I_{p_{(i,j)}+q_i}).
\end{align}
Using this distribution, a statistical test can be implemented using the negative log likelihood of the scale matrix given the observation $S_{(i,j),n}$
\begin{align}
    \mathcal{L}_{(i,j)}&=(1-\ell_{(i,j)}+p_{(i,j)}+q_i)\log(|S_{(i,j),n}|)+\nonumber\\
    &\quad+\text{trace}(S_{(i,j),n}).\label{eq:likelihood}
\end{align}
If $\mathcal{L}_{(i,j)}$ exceeds a user defined threshold, vehicle $i$ signals that an attack has likely occurred.
The threshold can be set to satisfy a desired performance metric such as the false alarm rate.

\SetKwFor{Loop}{Loop}{}{EndLoop}
\begin{algorithm}[h]
 \SetAlgoLined
 set cntrl\_lvl=3 (resp. 2)\;
 set $\hat{x}_{i,0}=0_{p_i}$\;
 set $n=0$\;
 set $H_i$ using \eqref{eq:fully_H} (resp. \eqref{eq:leader_H}) and \eqref{eq:partial_H}\;
 \Loop{}{
    \For{$j\in H_i$}{
        get $s_{(i,j),n}$\;
        set detect$_{(i,j),n}=0$\;
        \If{$n>\rho_{(i,j)}$ \textup{and cntrl\_lvl$\neq$1}}{
            set $V_{(i,j),n} = \bar{V}_{(i,j),h_i(n)}$\;
            set $\bar{r}_{(i,j),n}$ using \eqref{eq:normalized_concat_res}\;
        }
        \If{$n>\rho_{(i,j)}+\ell_{(i,j)}$}{
            set $G_{(i,j),n} = \bar{G}_{(i,j),h_i(n)}$\;
            set $\mathcal{L}_{(i,j),n}$ using \eqref{eq:res_window}, \eqref{eq:S_mat}, and \eqref{eq:likelihood}\;
            \If{$\mathcal{L}_{(i,j),n}>\textup{Threshold}_{(i,j)}$}{
                set $\text{detect}_{(i,j),n}$=1\;
            }
            \If{$(\sum_{k=n-39}^n \textup{detect}_{(i,j),n})>24$}{
                set cntrl\_lvl=1\;
                reformat $\hat{x}_{i,n}$
            }
        }
    }
    find $h_i(n)$ from high res. trajectory\;
    set $\{\bar{v}_{j,n},\bar{v}_{j,n}^d\}=\{\underline{v}_{j,h_i(n)},\underline{v}_{j,h_i(n)}^d\}$\;
    \uIf{\textup{cntrl\_lvl=3 (resp. 2)}}{
    set $M_{i,n},K_{i,n}$ using \eqref{eq:fully_M},\eqref{eq:fully_K} (resp. \eqref{eq:leader_M},\eqref{eq:leader_K})\;
    \For{$j\in H_i$}{
    set $L_{(i,j),n}$ using \eqref{eq:fully_L} (resp. \eqref{eq:leader_L1}-\eqref{eq:leader_L4})\;
    }
    sample $e_{i,n}$\;
    set $u_{i,n}=K_{i,n}\hat{x}_{i,n}+e_{i,n}+\bar{u}_{i,n}$\;
    }
    \Else{
    set $M_{i,n},~L_{(i,i),n},~K_{i,n}$ using \eqref{eq:acc_M},\eqref{eq:acc_L},\eqref{eq:acc_K}\;
    set $u_{i,n}=K_{i,n}\hat{x}_{i,n}+\bar{u}_{i,n}$\;
    }
    set $\hat{x}_{i,n+1}$ using \eqref{eq:observer_update}\;
    send $u_{i,n}$\;
    set $n=n+1$\;
}
\caption{Longitudinal Control for Vehicle $i$}\label{alg:cntrl}
\end{algorithm}
\section{Considerations for Implementation}\label{sec:consider}
This section derives practical solutions to the issue of drift along the trajectory as described in Section \ref{sec:platooning}.
More specifically, we devise a method for allowing the linearization to drift along the trajectory by constructing the discretized trajectory in a receding horizon fashion. 
Furthermore, we derive an approximation scheme for the normalization matrices used in the statistical tests that accommodates this drift as well.

We start by creating a \emph{high-resolution} trajectory denoted  $\{\underline{x}_{i,k},\underline{y}_{i,k},\underline{\psi}_{i,k},\underline{v}_{i,k},\underline{v}_{i,k}^d,\underline{\delta}_{i,k}\}_{k=0}^\infty$ which uses a step size 0.005 s or $\frac{1}{10}$ of the step size used when discretizing the dynamics, controllers, and observers.
At the $n^\text{th}$ 0.05 s step of the system, vehicle $i$ finds its closest point on the high-resolution trajectory denoted $h_i(n)$, then sets
\begin{align}
    &\{\bar{x}_{j,n},\bar{y}_{j,n},\bar{\psi}_{j,n},\bar{v}_{j,n},\bar{v}_{j,n}^d,\bar{\delta}_{j,n}\}=\nonumber\\
    &\quad\{\underline{x}_{j,h_i(n)},\underline{y}_{j,h_i(n)},\underline{\psi}_{j,h_i(n)},\underline{v}_{j,h_i(n)},\underline{v}_{j,h_i(n)}^d,\underline{\delta}_{j,h_i(n)}\}
\end{align}
for each $j\in\{1,\hdots,\kappa\}$.
Note that the step in the high-resolution trajectory found by each vehicle may differ due to error in the following distances between vehicles.
However, this difference remains small since the longitudinal controller regulates the following distance error.

Similarly to the discretized trajectory, the matrix normalizing factor $V_{(i,j),n}$ and the auto-correlation normalizing factor $G_{(i,j),n}$ are selected in a receding horizon fashion.
We accomplish this by approximating both normalizing factors for each step in the high resolution trajectory then selecting the appropriate normalizing factors at each 0.05 s step of the system using the index of the high resolution trajectory $h_i(n)$.
To this end, we denote the sample covariance at index $k$ of the high-resolution trajectory using the ensemble average
\begin{align}
    \bar{\Sigma}_{(i,j),k}= \frac{1}{f_{i,k}}\sum_{\tau=1}^{\tau^*}\sum_{h_i^{(\tau)}(n)=k}
    \begin{bmatrix}
    e_{i,n-\rho_{(i,j)}-1}^{(\tau)}\\r_{(i,j),n}^{(\tau)}
    \end{bmatrix}
    \begin{bmatrix}
    e_{i,n-\rho_{(i,j)}-1}^{(\tau)}\\r_{(i,j),n}^{(\tau)}
    \end{bmatrix}^\T\label{eq:sample_cov_estimate}
\end{align}
where the superscript $(\tau)$ denotes the realization number, $\tau^*$ is the total number of realizations, and $f_{i,k}=\text{card}(\{(\tau,n)~|~\tau\in\{1,\dotsc,\tau^*\}, h_i^{(\tau)}(n)=k\})$.
In the event that no samples are available for a given $k$, we set  $\bar{\Sigma}_{(i,j),k}=0_{p_{(i,j)+q_i}}$.
Since we are limited to having a finite number of realizations, there is no guarantee that the sample covariance matrices all have a sufficient number of samples to be invertable.
To overcome this obstacle, we take a weighted average such that
\begin{align}
    \bar{V}_{(i,j),k}=\left(\frac{1}{b_{i,k}}
    \sum_{\epsilon=k-10}^{k+10}\sigma^{|k-\epsilon|}\bar{\Sigma}_{(i,j),k}\right)^{-\frac{1}{2}},
\end{align}
where $0<\sigma<1$ is used to reduce the weight of samples that are farther away (we use $\sigma=0.8$) and $b_k$ is the sum of the weights for which samples exist
\begin{align}
    b_{i,k}=\sum_{\substack{\epsilon=k-10,\hdots k+10\\f_{i,\epsilon}>0}}\sigma^{|k-\epsilon|}.
\end{align}
Here, the range of indices in the summation was chosen such that the number of realizations needed to ensure invertability of $\bar{V}_{(i,j),n}$ should be no more than the dimension of the sample covariance matrix.
However, the number of samples should exceed this value to ensure a good approximation.
Finally, the matrix normalizing factor is approximated at each step as
\begin{align}
    V_{(i,j),n}\approx\bar{V}_{(i,j),h_i(n)}.\label{eq:matrix_norm_factor}
\end{align}

The auto-correlation normalizing matrices $G_{(i,j),n}$ can then be approximated using  \eqref{eq:normalized_concat_res}-\eqref{eq:res_window} and the approximate matrix normalization factor using the ensemble average
\begin{align}
    &\bar{G}_{(i,j),k}=
    \frac{1}{(p_{(i,j)}+q_i)g_{i,k}}\times\nonumber\\
    &\qquad\times\sum_{\tau=1}^{\tau^*}
    \sum_{\substack{h_i^{(\tau)}(n)\leq k\\h_i^{(\tau)}(n+1)>k}}\Bigg(
    \frac{1}{|h_i^{(\tau)}(n+1)-h_i^{(\tau)}(n)|}\times\nonumber\\
    &\qquad\times\left(
    |k-h_i^{(\tau)}(n+1)|
    \left(P_{(i,j),n}^{(\tau)}\right)^\T 
    P_{(i,j),n}^{(\tau)}+\right.
    \nonumber\\
    &\qquad+\left.|k-h_i^{(\tau)}(n)|
    \left(P_{(i,j),n+1}^{(\tau)}\right)^\T 
    P_{(i,j),n+1}^{(\tau)}\right)\Bigg),\label{eq:auto_corr_ensemble}
\end{align}
where the superscript $(\tau)$ denotes the realization number, $\tau^*$ is the total number of realizations, and $g_{i,k}=\text{card}(\{(\tau,n)|\tau\in\{1,\dotsc,\tau^*\},~h_i^{(\tau)}(n)\leq k,~h_i^{(\tau)}(n+1)> k\})$.
The auto-correlation normalizing matrix is then approximated at each step as
\begin{align}\label{eq:auto_corr}
G_{(i,j),n}\approx \bar{G}_{(i,j),h_i(n)}.
\end{align}

\section{Controller Design}\label{sec:design}

This section details the 3 different longitudinal controllers utilized in the networked robotic system.
In each case, we provide a general control scheme and the parameters chosen in this work based on their performance in the simulated platoon.
The level 3 and level 2 controllers both attempt to maintain constant spacing between vehicles.
However, they utilize different communication strategies that allow us to compare the benefits of full communication between vehicles in the platoon and a more limited communication strategy.
The level 1 controller attempts to maintain constant headway without the aid of V2V communication.
As a result, level 1 control strategy is used after an attack is detected, allowing the platoon to gracefully degrade.
Note that the lateral controller has been included in Appendix \ref{sec:lat_control} for completeness.

We assume that all vehicles in the platoon use the same communication level and switch to the mitigation strategy simultaneously when an attack is detected. 
For each level, the longitudinal controller and observer follows the LTV form \eqref{eq:control_input}-\eqref{eq:observer_update} however the matrices $K_{i,n},~L_{(i,j),n},~N_i,~M_i,~U_{(i,j)},$ and $W_{(i,j)}$ are level dependent.
Due to their size, the observer matrices $N_i$, $L_{(i,j),n}$, and  $M_i$ are located in Appendix \ref{sec:obs_mat}. 

\textbf{Level 3:} To take advantage of full V2V communication between the connected vehicles, a "fully-connected" controller and observer are devised.
In this case
\begin{align}
    H=\{1,\dotsc,\kappa\}\times\{1,\dotsc,\kappa\}\label{eq:fully_H}
\end{align}
and the measurements follow the model in \eqref{eq:measurement_model},\eqref{eq:C1}.
The controller gains satisfy
\begin{align} 
    &K_{i,n} =\nonumber\\
    &\Big[\hspace*{-4pt}
        \begin{array}{c;{2pt/2pt}c;{2pt/2pt}}
            \frac{0.5}{2^{(i-1)}} \cdots \frac{0.5}{2^{(1)}} & 
            \frac{-0.5}{2^{(0)}} \cdots \frac{-0.5}{2^{(\kappa-i-1)}}
            \end{array}\nonumber\\
            &\qquad\begin{array}{c;{2pt/2pt}c}
            \frac{0.1}{2^{(i-1)}} \cdots \frac{0.1}{2^{(1)}} & 
            \frac{-0.1}{2^{(0)}} \cdots \frac{-0.1}{2^{(\kappa-i)}}
        \end{array}\hspace*{-4pt}\Big]\label{eq:fully_K}
\end{align}
for $i\in\{1,\hdots,\kappa\}$.
The idea here is to have each vehicle react to the velocity and following distance error of all other vehicles.
In doing so, the watermark of each vehicle affects all others in the platoon.
Furthermore, the magnitude of the control gains decays exponentially for vehicles further away in the platoon to reduce the combined effect of the watermarks especially in larger platoons.

As the platoon size increases, the number of communication channels and corresponding incoming messages for each vehicle increases.
This problem along with other physical limiting factors such as latency between vehicles at the ends of the platoon can prove troublesome in larger, fully-connected platoons.
Along with these limitations, the fact that the visibility of the watermark from vehicle $i$ in vehicle $j$ reduces as the platoon positions $i$ and $j$ are further apart means there are diminishing returns, from an attack detection standpoint, to having level 3 communication in larger platoons.

\textbf{Level 2:} 
In light of the limitations associated with level 3, a less connected strategy where 
\begin{align}
    H=\{(i,j)\in\{1,\dotsc,\kappa\}^2~|~j=1\text{ or }|i-j|\leq1\}\label{eq:leader_H}
\end{align}
and measurements that follow the measurement model defined in \eqref{eq:measurement_model}, \eqref{eq:C2} 
are considered for the level 2 communication.
In this case, each vehicle observes a subset of the states.
Namely, its distance to the lead vehicle, preceding vehicle, and following vehicle, and the velocities of each of these vehicles.

Next we derive the level 2 control strategy using inspiration from  \citet[Section 3.4]{swaroop1999constant}, in which the controller uses the state of the lead and preceding vehicles to calculate a desired acceleration as
\begin{align}\label{eq:leader_follow}
    \dot{v}_{i,n} &= \frac{1}{1+\gamma_{1}}[\dot{v}_{i-1,n} + \gamma_{1}\dot{v}_{1,n} - (\gamma_{2} + 0.6)(v_{i,n} - v_{i-1,n}) + \nonumber\\ &- 0.6\gamma_{2}(d_{i-1,n}-\bar{d}_{i-1,n}) - (\gamma_{3} + 0.6\gamma_{1})(v_{i,n} - v_{1,n}) +  \nonumber\\ 
    &+ 0.6\gamma_{3}(\underline{d}_{i,n}-\bar{\underline{d}}_{i,n})],
\end{align}
where $\gamma_1$ is used to shift the relative gain from the acceleration of the preceding vehicle to that of the leading vehicle, $\gamma_2$ adjusts the control gains corresponding to following distance and relative velocity to the previous vehicle, and $\gamma_3$ adjusts the control gains corresponding to the following distance and relative velocity to the lead vehicle.
To enact this control policy we start by setting $\gamma_1=0.2,~\gamma_2=1,$ and $\gamma_3=1.2$ to achieve a spacing error attenuation rate of less than 0.5 \cite[Equation 3]{swaroop1999constant}.
Then, since our controller will specify the desired velocity $v^d$ instead of the desired acceleration, we relate the two using the partial derivative 
\begin{align}
    \frac{\partial\dot{v}_{i,n}}{\partial v^d_{i,n}}
    =-1.2(c_6+2c_7(v_{i,n}-v_{i,n}^d)).
\end{align}
Further, we assume that the deviation in the acceleration of the lead vehicle and preceding vehicle are negligible allowing us to ignore the corresponding terms.
The resulting controller gain matrices satisfy
\begin{align}
    &K_{i,n}=\frac{1}{-1.2(c_6+2c_7(\bar{v}_{i,n}-\bar{v}^d_{i,n}))}\times\nonumber\\
    &\times\begin{cases}
    \begin{bmatrix}
    0 & -2.92 & 0
    \end{bmatrix} & i=1\\\noalign{\vskip3pt}
    \begin{bmatrix}
    1.32 & 0 & 2.92 & -2.92 & 0
    \end{bmatrix} & i=2\\\noalign{\vskip3pt}
    \begin{bmatrix}
    0.72 & 0.6 & 1.32 & 1.6 &-2.92
    \end{bmatrix} & i=\kappa\\\noalign{\vskip3pt}
    \begin{bmatrix}
    0.72 & 0.6 & 0 & 1.32 & 1.6 &-2.92 & 0
    \end{bmatrix} & \text{o/w},
    \end{cases}\label{eq:leader_K}
\end{align}
with the corresponding observed state $\hat{x}_{i,n}$ approximating $N_ix_{n}$ as defined in Table \ref{tab:obs_table1} located in Appendix \ref{sec:obs_mat}.

\textbf{Level 1:} In the event of an attack detection, communication between agents in the network should be severed to mitigate potential harm to the system.
To maintain operation of the platoon, the vehicles are able to switch to a non-communicative platoon strategy such that 
\begin{align}
    H=\{(i,i)~|~i\in\{1,\dotsc,\kappa\}\}.
\end{align}
Here each vehicle still measures and observes its own velocity and the distance to the preceding vehicle.

The non-communicative level 1 controller is inspired by the University of Michigan Transportation Research Institute's  (UMTRI)'s algorithm  for adaptive cruise control (ACC) \cite[Equation 1]{fancher2002evaluating} which satisfies
\begin{align}
    v_{i,n}^d&=
    v_{i-1,n} + \phi_1(f_{i-1,n} -T_h v_{i,n}) + \nonumber\\
    &\quad+ \phi_2(v_{i-1,n}-v_{i,n}), \label{eq:cruise_controller1}
\end{align}
where $\phi_1$ is the control gain for the error in the headway, $f_{i-1,n}$ is the bumper-to-bumper distance to the previous vehicle, and $\phi_2$ is the control gain on the derivative of the following distance.
In this work, we set $\phi_1=1,~\phi_2=0.2,$ and $T_h=1$.
Since \eqref{eq:cruise_controller1} is already linear the control gain $K_{i,n}$ is written directly as
\begin{align}
    K_{i,n}=\begin{cases}
    \begin{bmatrix}
    -1
    \end{bmatrix} & i=1\\\noalign{\vskip3pt}
    \begin{bmatrix}
    1 & 1.2 & -1.2
    \end{bmatrix} & i\neq 1,
    \end{cases}\label{eq:acc_K}
\end{align}
with the corresponding observed state $\hat{x}_{i,n}$ approximating $N_ix_{n}$ as defined in Table \ref{tab:obs_table1} located in Appendix \ref{sec:obs_mat}.
Note that a proportional gain is applied to the error in the lead vehicle's velocity since there is no preceding vehicle.

Since the level 1 control strategy is considered to be safe from cyber attacks across communication channels, we do not employ attack detection or a watermark after the switch is made.
Though the level 1 controller is less susceptible to cyber attacks, it is unable to maintain constant following distances without sacrificing safety.
In contrast, both the level 3 and level 2 control strategies enable the use of constant following distances.
As a result, these strategies can lead to significant improvement in fuel economy and throughput on roads.
\section{Simulated Experiments}\label{sec:sim}
\begin{figure}[t]
    \centering
    \includegraphics[width=0.48\textwidth]{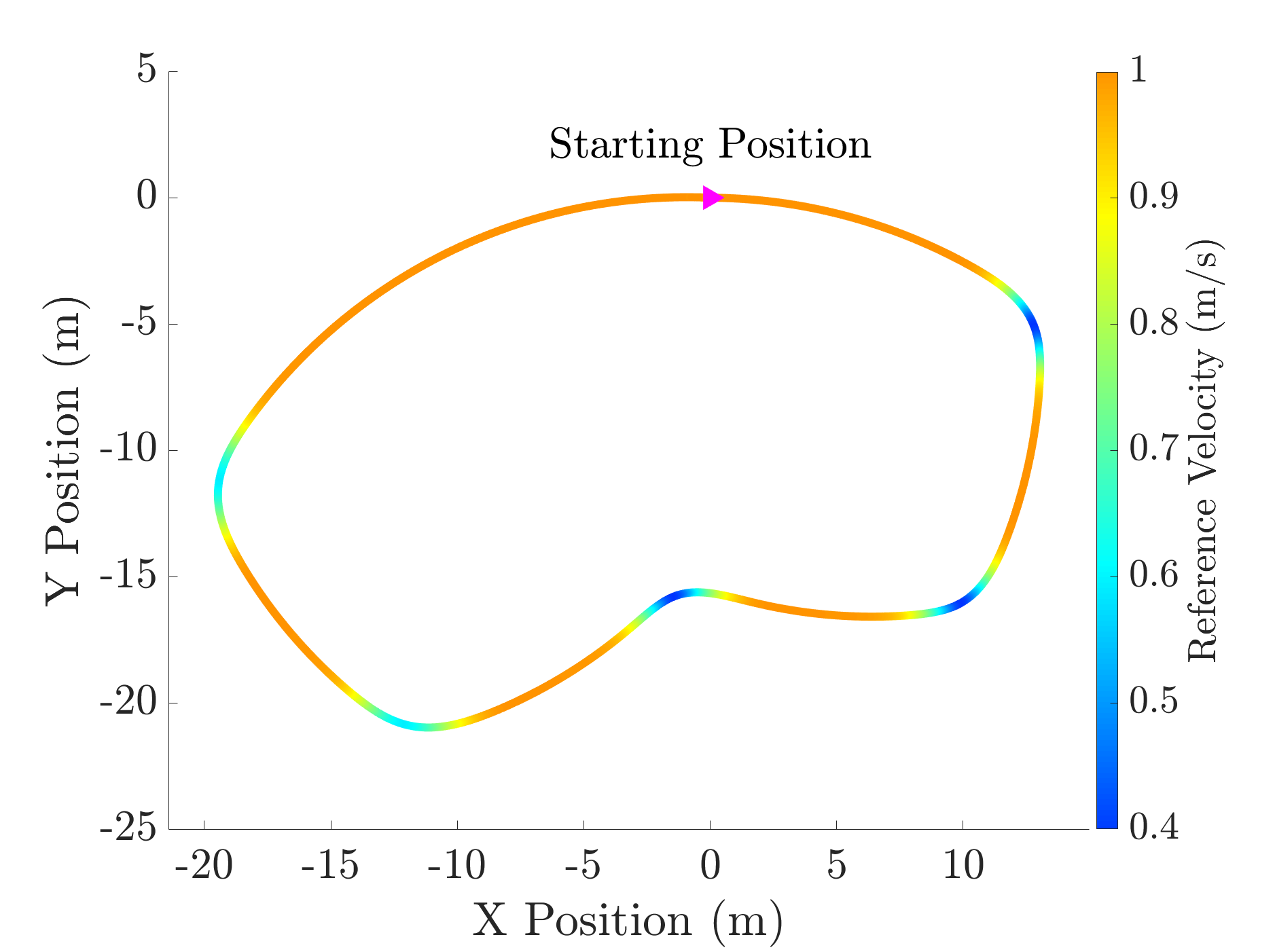}
    \caption{Reference trajectory of the lead vehicle in simulated platoon experiments. Each simulation consists of three laps.}
    \label{fig:velocitymap}
\end{figure}
This section illustrates the effectiveness of networked LTV dynamic watermarking on detecting attacks on V2V communication of platooning vehicles.
The experiment is implemented on a simulated platoon of four autonomous vehicles 
traveling three times around the looped path illustrated in
Figure \ref{fig:velocitymap}.
In the simulation, the vehicles have a vehicle length of 0.5m and drive according to the non-linear dynamics defined in \eqref{eq:dynamics}.

For simulations of the level 3 and 2 controllers, the platoon was tasked with maintaining a 1m constant following distance which was chosen based on the ability of each controller to maintain the desired distance as shown in Table \ref{tab:effect_of_watermark}.
Simulations of the level 1 controller had the platoon maintain a constant headway of 1 second.
 
Process and measurement noise as defined in \eqref{eq:process_noise} and \eqref{eq:measurement_noise} were also added at each step $n$, where $\Sigma_{w,n} = 1 \times 10^{-6} I_p$, and $\Sigma_{z_i,n} = \text{diag}(1 \times 10^{-4}, 1 \times 10^{-3}) \text{ for } i \in \{2, \dots, \kappa\}$. For the lead vehicle, the measurement noise covariance is $\Sigma_{z_{1},n} = 1 \times 10^{-3}$.
However, for the special case where the distance from each vehicle to the lead vehicle $\underline{d}_{i,n}$ is treated as a measurement the value $\Sigma_{z_{1},n} = \text{diag}(1 \times 10^{-4}I_{\kappa-1}, 1 \times 10^{-3})$ is used instead.
While the noise added to the system is Gaussian, the state update between time steps is done using the nonlinear dynamics in equation \eqref{eq:dynamics}.
This results in a non-Gaussian distribution of the platoon state, which is meant to better approximate the noise of a real-world system.

\subsection{Dynamic Watermarking Setup}
\label{ssec:attack_detection}
A watermark as defined in \eqref{eq:control_input} was added to each vehicles input at each time step where
\begin{align}
     \Sigma_{e_i} = 0.25, \forall i \in \{1, \dots, \kappa\}.
     \label{eq:exp_watermark_cov}
\end{align}
While the watermark enables the detection of a wider range of attacks, it also increases the noise in the system resulting in reduced performance of the controller.
This leads to a trade off between performance and making sure the watermark is sufficiently visible in the face of other noise sources.
As mentioned in Section \ref{sec:int_platooning}, the benefit of V2V communications in vehicle platooning stem from the reduction in following distance under a constant spacing policy.
Therefore, to observe the reduction in performance due to the watermark, the mean and standard deviation of the bumper-to-bumper distances between vehicles were computed over 20 simulations with and without the added watermark.
Table \ref{tab:effect_of_watermark} shows the results for each level controller.
Since the level 1 controller is used only after an attack is detected, we do not add a watermark to this controller.
From this comparison, we note that there is indeed a reduction in performance resulting from the watermark as illustrated by the increased standard deviation for both the level 3 and level 2 controller.
However, even with this reduced performance, the level 3 and level 2 controller still maintain a smaller following distance than the level 1 controller.

\begin{table}[htb]
    \centering
    \begin{tabular}{c|c|c|c|c}
     &  \multicolumn{2}{c|}{Without watermark} & \multicolumn{2}{c}{With watermark} \\
    \hline   
    Level   & Mean & Std & Mean & Std  \\
    \hline 
    3      & 0.50 & 0.01 & 0.50 & 0.11 \\
    2      & 0.50 & 0.02 & 0.49 & 0.08 \\
    1      & 1.35 & 0.21 & - & -
    \end{tabular}
    \caption{
    Aggregate statistics for bumper-to-bumper distance (m) using 20 un-attacked simulations for each controller/watermark combination. Each simulation consists of a platoon of four vehicles following the trajectory in Figure \ref{fig:velocitymap}.
    }
    \label{tab:effect_of_watermark}
\end{table}

For each controller, the matrix normalization factor and the auto-correlation normalizing factor were generated from 50 simulations using \eqref{eq:sample_cov_estimate}-\eqref{eq:matrix_norm_factor} and \eqref{eq:auto_corr_ensemble}-\eqref{eq:auto_corr} respectively. 
To illustrate the benefit of using the matrix normalizing factor and auto-correlation normalizing factor of the proposed method, we provide a comparison to the LTI equivalent \cite{Hespanhol2018}.
We compute the matrix normalization factor and auto-correlation normalizing factor for the LTI case as follows
\begin{align}
    V_{(i,j)} &= \frac{1}{6001}\sum_{n=0}^{6000}V_{(i,j), n}, \\
    G_{(i,j)} &= I_{p_{(i,j)}+q_i},
\end{align}
where 6001 is the number of steps in the simulation.

\begin{figure}
    \centering
    \includegraphics[width=0.48\textwidth]{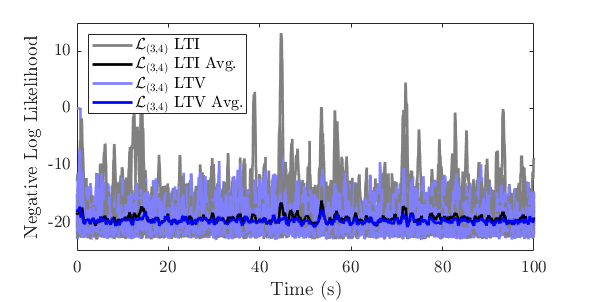}
    \caption{Comparing LTV to LTI Dynamic Watermarking}
    \label{fig:LTI_LTV_compare}
\end{figure}
For measurement $s_{(3,4)}$ we calculate the negative log likelihood using both the LTI and LTV normalizing factors for 20 un-attacked simulations as illustrated in Figure \ref{fig:LTI_LTV_compare}.
While the average negative log likelihood signal, taken over 20 simulations, is similar for both the LTI and LTV case, the actual signal for the LTI case shows far more spiking.
As a result, the threshold in the LTI case would likely need to be set higher to avoid false alarms.
However, a higher threshold reduces the ability of the detector to identify attacks.
Hence, the LTV Dynamic Watermarking is superior for this system.

To select a robust threshold, the attack thresholds are computed to achieve a desired false alarm rate based on a set of un-attacked trials.
The false alarm rate is defined as the number of time steps above the attack threshold divided by the total number of time steps. 
In this paper, 20 trials were used to calculate the thresholds which achieved a false alarm rate of 0.5\%.

To decide when to switch to the level 1 controller, we count the number of times each negative log likelihood has exceeded its threshold in the last 40 steps.
If this value exceeds 24 (60\%) for any given communication channel, the platoon switches to the level 1 controller.
The values of 60\% and 40 steps were chosen based on their ability to reduce the number of unnecessary switches from false alarms while still avoiding collisions in our simulated platoon.

\subsection{Attack Schemes}
\label{ssec:attack_scheme}

 \begin{figure*}[tbh]
    \centering
    \includegraphics[trim={0 1.3cm 0 1.5cm},clip,width=\textwidth]{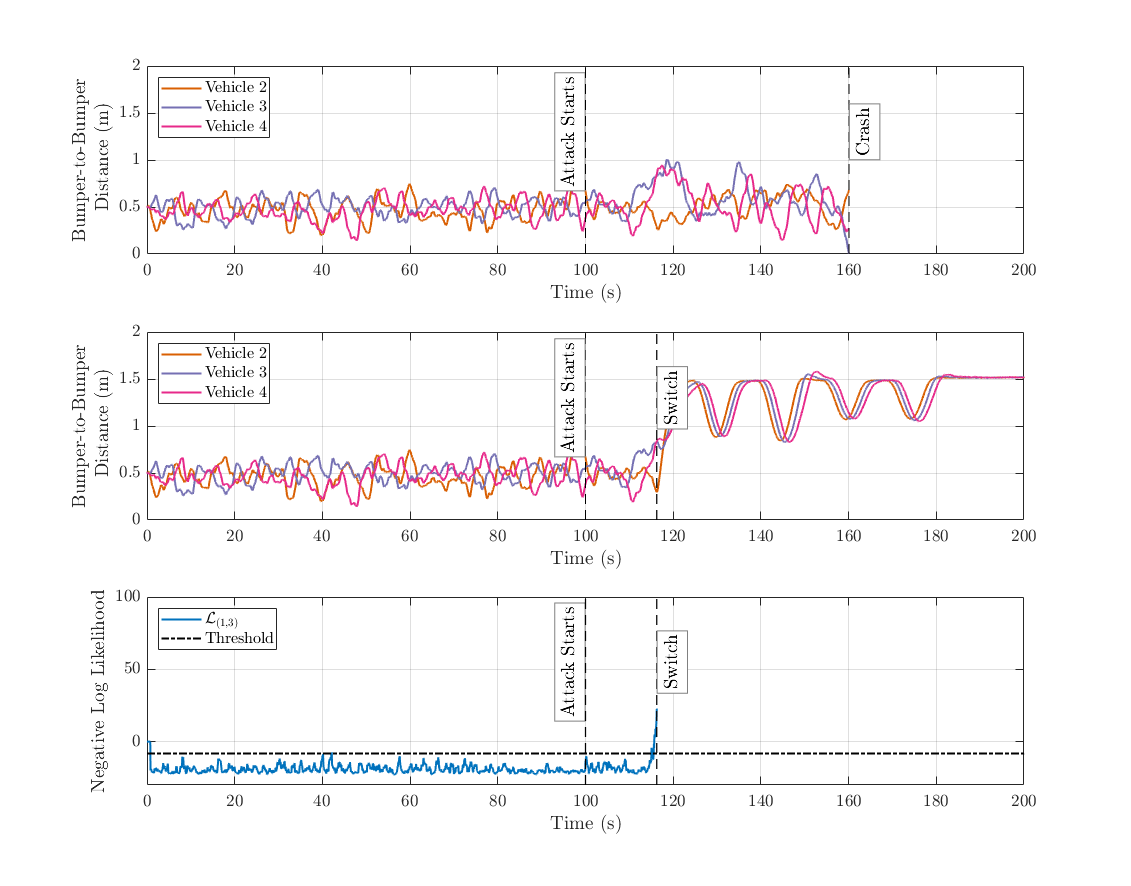}
    \caption{
    (top) Performance of the level 3 controller after a replay attack without switching to the level 1 controller and crashing soon thereafter.
    (middle) Platoon switching to level 1 controller after detecting the attack and safely completing the entire trajectory. 
    (bottom) Negative log likelihood of channel which detected the replay attack first.
    }
    \label{fig:results_fc_replay}
\end{figure*}

\begin{figure*}[tbh]
    \centering
    \includegraphics[trim={0 1.3cm 0 1.5cm},clip,width=\textwidth]{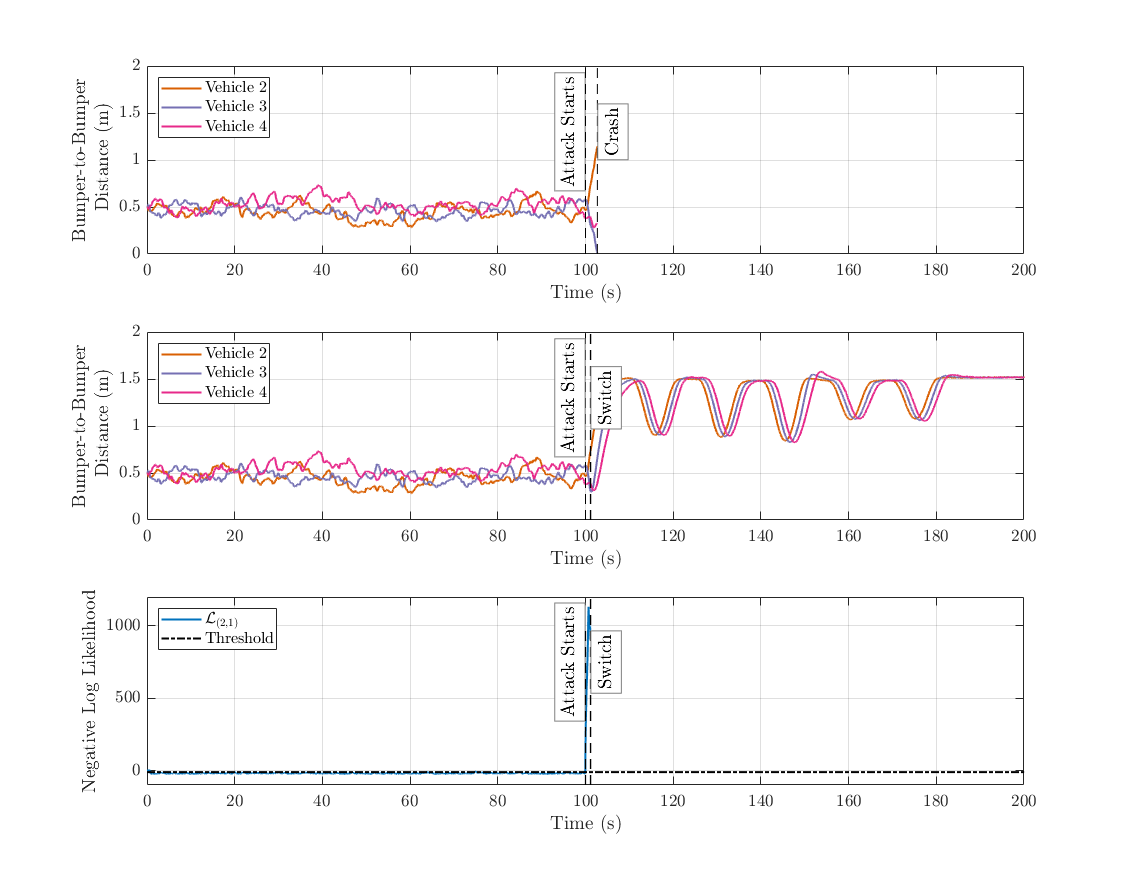}
    \caption{
    (top) Performance of the level 2 controller after an aggressive attack without switching to the level 1 controller and crashing soon thereafter.
    (middle) Platoon switching to level 1 controller after detecting the attack and safely completing the entire trajectory. 
    (bottom) Negative log likelihood of channel which detected the replay attack first.
    }
    \label{fig:results_lf_zero}
\end{figure*}

In this paper, we considered two different types of attacks, the stealthy replay attack and an aggressive attack.
While these attacks are not necessarily optimal, they are meant to represent two possible approaches of an attacker: 1) to go unnoticed while the effect of the attack slowly builds and 2) to make no attempt at remaining stealthy while trying to affect the system before a mitigation strategy can be implemented.

For a replay attack, the measurement signals (i.e. $s_{(i,j),n}$) from all or a subset of $H$ communication channels from an un-attacked realization are recorded and then played back when the simulation is run for a separate realization. 
Since an attack need not start at the beginning, we chose to start the attack 100s after the start of the simulation.
The replayed measurements included the distance between vehicles and the velocity of each vehicle in a platoon.

For the aggressive attack, we aimed to cause a collision as quickly as possible.
To generate the attack, the communication channel from vehicle 1 to vehicle 2 (i.e. $s_{(2,1),n}$) is hacked and the velocity measurement is set to zero. 
The attack leads vehicle 2 to believe the lead vehicle is braking and so brakes as well.
This results in vehicle 3, which receives the unaltered measurement from the lead vehicle, to crash into vehicle 2.
While this attack scheme is overt, it only requires intercepting a single communication channel and, if not mitigated quickly, results in a crash.
As with the replay attack, we start this attack 100s after the start of the simulation.

\subsection{Simulation Results}
To demonstrate the proposed detection algorithm, simulations were run for the level 2 and 3 control methods following Algorithm \ref{alg:cntrl} with an attack scheme as described in Section \ref{ssec:attack_scheme}. 
After an attack was detected as described in Section \ref{ssec:attack_detection}, the simulation was split into two concurrent simulations of the platoon, one in which the platoon degrades to the level 1 controller and one in which the platoon does not.
For the simulation, a crash was defined as the bumper-to-bumper distance between any two vehicles reaching 0 m.

For the simulations presented here, the replay attack involved attacking all communication channels between vehicles. 
However, even for simulations where a subset of channels were replay attacked, the attack was detected in the corresponding channels and the controller was successfully able to degrade.
This is important because even attacking a subset of communication channels can result in a crash and so being able to detect this in a timely manner is crucial.
In all cases, the attack was detected before any crash allowing the platoon to gracefully degrade to the level 1 controller.

For the level 3 controller, we see the effects of the replay attack in Figure \ref{fig:results_fc_replay}.
Even though the replay attack is subtle in operation, it is still able to cause a crash if the platoon does not degrade control schemes.
However, the level 3 controller appears resilient to the aggressive attack as illustrated in Figure \ref{fig:results_fc_zero} located in Appendix \ref{sec:sim_results_append}.
This is likely because the level 3 controller has a higher weighting on maintaining constant distance than velocity according to the controller gain in \eqref{eq:fully_K}.

In contrast, the level 2 controller appears to be more susceptible to the aggressive attack.
In Figure \ref{fig:results_lf_zero}, the performance of the level 2 controller worsens drastically under the aggressive attack as vehicle 2 brakes and almost immediately collides with vehicle 3 as a result.
However, the level 2 controller is more resilient to the replay attack, as seen in Figure \ref{fig:results_lf_replay} located in Appendix \ref{sec:sim_results_append}.
This is likely because the level 2 controller relies mainly on states that are measured directly.

\subsection{Scaling the platoon size}

To highlight the effect of scaling up the platoon size, we ran the same simulated experiments on a platoon of ten autonomous vehicles. 
In each realization, the platoon was subject to one of three possible attacks for both level 3 and 2 controllers: a replay attack on $30\%$ of communication channels, a replay attack on $100\%$ of channels, and an aggressive attack (as described in Section \ref{ssec:attack_scheme}).
For each realization, the time of attack was randomly sampled from a range of $[100, 200]$s and the subset of communication channels attacked are randomly selected to avoid any bias from the trajectory or specific communication channels.

For each controller, the matrix normalization factor and the auto-correlation normalizing factor were generated from 100 un-attacked realizations.
Attack thresholds are computed on 500 un-attacked realizations for a rate of false alarm of 0.1\%.
The platoon switches to the level 1 control strategy when the negative log likelihood exceeds the attack threshold for 18 steps in the last 40 steps (45\%) for any communication channel.

The results of comparing LTI vs LTV Dynamic Watermarking for the level 3 and level 2 controllers are shown in Tables \ref{tab:detection_stats_level3} and \ref{tab:detection_stats_level2} respectively.
The results are shown for replay attack 1/ replay attack 2/ aggressive attack.

\begin{table}[htb]
    \centering
    \begin{tabular}{|c|c|c|}
        \hline
                                & LTI                   & LTV               \\ \hline
        Successful detections   & 1513/1494/1530            & 1903/1876/1913        \\ \hline
        False alarms            & 482/470/470               & 88/78/87           \\ \hline
        Potential crashes       & \multicolumn{2}{c|}{1099/1997/24}             \\ \hline
        Actual crashes          & 4/36/0              & 8/46/0           \\ \hline
        No attack detected  & \multirow{2}{*}{1/0/0}    & \multirow{2}{*}{1/0/0} \\
        and no crash caused & & \\ \hline
        Mean detection & \multirow{2}{*}{2.771/2.823/0.905} & \multirow{2}{*}{2.701/2.587/0.957} \\ 
        time (s)      &  &\\ \hline
        Standard deviation   & \multirow{2}{*}{3.118/3.125/0.041}  & \multirow{2}{*}{3.812/3.576/0.363} \\ 
        detection time (s)        &           &  \\ \hline
    \end{tabular}
    \caption{Attack detection statistics for 2000 trials of level 3 controller.}
    \label{tab:detection_stats_level3}
\end{table}

\begin{table}[htb]
    \centering
    \begin{tabular}{|c|c|c|}
        \hline
                                & LTI                   & LTV               \\ \hline
        Successful detections   & 1880/1902/1968            & 1944/1987/1982        \\ \hline
        False alarms            & 103/98/32               & 21/13/18           \\ \hline
        Potential crashes       & \multicolumn{2}{c|}{3/4/2000}             \\ \hline
        Actual crashes          & 0/0/0               & 0/0/0           \\ \hline
        No attack detected  & \multirow{2}{*}{17/0/0}    & \multirow{2}{*}{35/0/0} \\
        and no crash caused & & \\ \hline
        Mean detection & \multirow{2}{*}{2.193/0.907/1.099} & \multirow{2}{*}{1.925/0.901/0.899} \\
        time (s)      &  &\\ \hline
        Standard deviation  & \multirow{2}{*}{10.221/0.075/0.031} & \multirow{2}{*}{9.003/0.077/0.022} \\ 
        detection time (s)        &           &  \\ \hline
    \end{tabular}
    \caption{Attack detection statistics for 2000 trials of level 2 controller.}
    \label{tab:detection_stats_level2}
\end{table}

From Tables \ref{tab:detection_stats_level3} and \ref{tab:detection_stats_level2}, potential crashes represent the number of realizations which would crash when not running the dynamic watermarking attack detection.
Looking at the potential crashes, we see that the level 3 controller is more robust to aggressive attacks whereas the level 2 controller is more robust to replay attacks.

Actual crashes represent the number of realizations which crashed while running the LTI or LTV dynamic watermarking algorithm. These crashes result from the effect of the attack on the platoon remaining below the user-defined threshold for detection. Selecting different user-defined parameters can reduce the number of actual crashes with the trade off of potentially higher number of false alarms. 

It is worth noting that for the level 3 controller, one replay attack 1 realization using the LTI attack detection scheme and one realization using the LTV scheme were not determined successful detections, false alarms, or crashes.
Upon further investigation, we concluded that the platoon performance was not affected by the attack in those realizations and so the dynamic watermark algorithm was not able to successfully detect the attack.
A similar conclusion was made, looking at replay attack 1 for the level 2 controller, for 17 and 35 realizations using the LTI and LTV attack detection schemes respectively, where the replay attack did not deteriorate the performance of the platoon significantly and so there were no crashes and no successful detection.
These statistics are recorded in Tables \ref{tab:detection_stats_level3} and \ref{tab:detection_stats_level2} as no attack detected and no crash caused.
Selecting different user-defined parameters may improve the detection rate for these realizations.

When comparing the LTV dynamic watermarking algorithm to the LTI version, we see that the LTV attack detection scheme has a greater number of successful attack detections and lesser number of false alarms while maintaining a similar number of crashes. 
This difference in performance is highlighted in Table \ref{tab:detection_stats_level3}, where successful attack detection using LTI dynamic watermarking is approximately 75\% whereas using LTV dynamic watermarking, we achieve approximately 95\%. 
Increasing the user-defined thresholds for the LTI attack detection scheme led to a marginal decrease in number of false alarms at the cost of a substantial increase in number of crashes. 
Hence, the LTV dynamic watermarking is superior for this system.
\section{Conclusion}\label{sec:conclusion}
In this paper, we formulated a linear-time varying version of dynamic watermarking for networked robotic systems and implemented it on a simulated platoon of autonomous vehicles.
We introduced different various levels of vehicle to vehicle communication and defined corresponding longitudinal controllers two of which leveraged the extra information for feedback control while the third does not rely on any vehicle to vehicle communication and was used as a mitigation strategy in the event of an attack.
While detailing the statistical tests, we provide implementation considerations on how to efficiently generate the normalization matrices used in the tests.
Compared to linear-time invariant dynamical watermarking, we showed that the method proposed in this paper is superior in that it provides a more consistent test metric.
We described two different attack schemes, one stealthy and one aggressive, and showed that our algorithm could detect both types of attack while utilizing each controller and successfully degrade to a safe control strategy before a crash can occur.

\renewcommand{\bibfont}{\normalfont\small}
\printbibliography

\appendix
\subsection{Derivation of LTV Dynamics}\label{sec:derive}
This section provides a thorough derivation of the lateral and longitudinal dynamics used in this paper.
\begin{table}[b]
    \centering
    \begin{tabular}{| c | c |}
        \hline
        \textbf{Constant} & \textbf{Value}\\\hline\hline
        $c_1$ &  $ 1.6615\times10^{-5}$\tabspacer\\\hline
        $c_2$ &  $-1.9555\times10^{-7}$\tabspacer\\\hline
        $c_3$ &  $ 3.619 \times10^{-6}$\tabspacer\\\hline
        $c_4$ &  $ 4.382 \times10^{-7}$\tabspacer\\\hline
        $c_5$ &  $-8.1112\times10^{-2}$\tabspacer\\\hline
        $c_6$ &  $-1.4736\times10^{ 0}$\tabspacer\\\hline
        $c_7$ &  $ 1.2569\times10^{-1}$\tabspacer\\\hline
        $c_8$ &  $ 7.6459\times10^{-2}$\tabspacer\\\hline
        $c_9$ &  $-1.3991\times10^{-2}$\tabspacer\\\hline
    \end{tabular}
    \caption{Fitted constants for the nonlinear dynamics in Eq. \eqref{eq:dynamics}}
    \label{tab:dyn_const}
\end{table}
Let $\{\bar{x}_{i}(t),\bar{y}_{i}(t),\bar{\psi}_{i}(t),\bar{v}_{i}(t),\bar{v}_{i}^d(t),\bar{\delta}_{i}(t)\}$ be the continuous states and inputs that define the trajectory for vehicle $i$,
and let $\{\bar{x}_{i,n},\bar{y}_{i,n},\bar{\psi}_{i,n},\bar{v}_{i,n},\bar{v}_{i,n}^d,\bar{\delta}_{i,n}\}_{n=0}^\infty$ be the corresponding sampled trajectory for the discretized system.
To ease notation we drop the $(t)$ from the continuous states.

\subsubsection{Lateral Dynamics}
Consider the lateral error $\Delta \text{lat}_i$ and heading error $\Delta \psi_i$ defined as $\Delta \text{lat}_i = (y_i-\bar{y}_i)\cos(\bar{\psi}_i)-(x_i-\bar{x}_i)\sin(\bar{\psi}_i)$ and $\Delta \psi_i=\psi_i-\bar{\psi}_i$.
We approximate the continuous dynamics as follows.
\begin{align}
    \Delta\dot{\psi}_i&=\dot{\psi}-\dot{\bar{\psi}}=\frac{\tan(c_1\delta_i+c_2)v_i}{c_1+c_4v_i^2}-\frac{\tan(c_1\bar{\delta}_i+c_2)\bar{v}_i}{c_3+c_4\bar{v}_i^2}=\nonumber\\
    &\approx\frac{(\tan(c_1\delta_i +c_2)
    -\tan(c_1\bar{\delta}_i+c_2)) \bar{v}_i}{c_3 + c_4\bar{v}_i^{2}}\label{eq:nl_heading_error},
\end{align}
where the second equality comes from \eqref{eq:dynamics} and the approximation from $v_i\approx\bar{v_i}$ and
\begin{align}
    &\dot{\Delta\text{lat}}_i=(\dot{y}_i-\dot{\bar{y}}_i)\cos(\bar{\psi}_i)-(\dot{x}_i-\dot{\bar{x}}_i)\sin(\bar{\psi}_i)+\nonumber\\
    &\qquad-\dot{\bar{\psi}}_i\left((x_i\bar{x}_i)\cos(\bar{\psi}_i)+(y)_i\bar{y}_i)\sin(\bar{\psi}_i)\right)=\nonumber\\
    &\approx v_i\sin(\Delta\psi_i)+\dot{\psi}_i(c_8+c_9v_i^2)\cos(\Delta\psi_i)+\nonumber\\
    &\qquad-\dot{\bar{\psi}}_i(c_8+c_9\bar{v}_i^2)=\nonumber\\
    &\approx\bar{v}_i\sin(\Delta\psi_i)+\frac{(c_8+c_9\bar{v}_i^2)\bar{v}_i}{c_3 + c_4\bar{v}_i^{2}}\times\nonumber\\
    &\qquad\times(\tan(c_1\delta_i +c_2)\cos(\Delta\psi_i)-\tan(c_1\bar{\delta}_i+c_2))\label{eq:nl_lat_error},
\end{align}
where the first inequality comes from \eqref{eq:dynamics} and $(x_i-\bar{x}_i)\cos(\bar{\psi}_i)+(y_i-\bar{y}_i)\sin(\bar{\psi}_i)\approx0$, and the second from \eqref{eq:dynamics} and $v_i\approx\bar{v_i}$.
These approximations are reasonable since the longitudinal controller aims to maintain the desired velocity and we reduce longitudinal error defined as  $(x_i-\bar{x}_i)\cos(\bar{\psi}_i)+(y_i-\bar{y}_i)\sin(\bar{\psi}_i)$ by accommoding drift along the trajectory.
Linearizing \eqref{eq:nl_heading_error}-\eqref{eq:nl_lat_error} then gives us
\begin{align}
    &\begin{bmatrix}
    \dot{\Delta \text{lat}}_i\\
    \dot{\Delta\psi}_i
    \end{bmatrix}=
    \begin{bmatrix}
    0&\bar{v}_i\\0&0
    \end{bmatrix}
    \begin{bmatrix}
    \Delta \text{lat}_i\\
    \Delta\psi_i
    \end{bmatrix}
    +\nonumber\\&\quad
    +\frac{c_1 \bar{v}_i}
    {\cos^2(c_1\bar{\delta}_i+c_2)(c_3+c_4\bar{v}_i^2)}\begin{bmatrix}
    (c_8+c_9\bar{v}_i^2)\\
    1
    \end{bmatrix}
    \Delta \delta_i,
\end{align}
where $\Delta \delta_i=\delta_i-\bar{\delta}_i$.
Discretizing using a step size of 0.05 s and a zero-order hold on $\bar{v}_i$ and $\delta_i$ then results in
\begin{align} \label{eq:lateral_dynamics}
    \begin{bmatrix}
    \Delta \text{lat}_{i,n+1}\\
    \Delta\psi_{i,n+1}
    \end{bmatrix}&=
    \begin{bmatrix}
    1&\frac{\bar{v}_{i,n}}{20}\\0&1
    \end{bmatrix}
    \begin{bmatrix}
    \Delta \text{lat}_{i,n}\\
    \Delta\psi_{i,n}
    \end{bmatrix}+\nonumber\\
    &\quad+\frac{c_1 \bar{v}_{i,n}}
    {\cos^2(c_1\bar{\delta}_{i,n}+c_2)(c_3+c_4\bar{v}_{i,n}^2)}\times\nonumber\\
    &\quad\times\begin{bmatrix}
    \left(\frac{(c_8+c_9\bar{v}_{i,n}^2)}{20}+\frac{\bar{v}_{i,n}}{800}\right)\\
    \frac{1}{20}
    \end{bmatrix}
    \Delta \delta_{i,n}.
\end{align}

\subsubsection{Longitudinal Dynamics}
The state of the longitudinal dynamics for a platoon of $\kappa$ vehicles, as illustrated in Figure \ref{fig:platoon}, is made up of each vehicle's velocity $v_{1,n},\hdots,v_{\kappa,n}$, and the distances between subsequent vehicles $d_{1,n},\hdots,d_{\kappa-1,n}$.
Next, under the assumption that the tracking error for the lane keeping controller is sufficiently small, we linearize the longitudinal dynamics from \eqref{eq:dynamics} as
\begin{align}
    \dot{\Delta d}_i= \Delta v_i-\Delta v_{i+1}\\
    \dot{\Delta v}_i=\alpha_i\Delta v_{i}-\alpha_i \Delta v_i^d,
\end{align}
where $\Delta d_i=d_i-\bar{d}_i$, $\Delta v_i=v_i-\bar{v}_i$, $\Delta v_i^d=v_i^d-\bar{v}_i^d$, and $\alpha_i$ is the continuous equivalent to \eqref{eq:alpha} defined as
\begin{align}
    \alpha_i=c_6+2c_7(\bar{v}_i-\bar{v}_i^d).
\end{align}
Selecting a time step of 0.05 s and assuming a zero-order hold on the input, these dynamics are then discretized as
\begin{align}
    \Delta d_{i,n+1}=&\Delta d_{i,n}+\frac{\beta_i-1}{\alpha_i}\Delta v_{i,n}-\frac{\beta_{i+1}-1}{\alpha_{i+1}}\Delta v_{i+1,n}+\nonumber\\
    &\hspace{-1.4cm}-(\frac{\beta_{i}-1}{\alpha_i}-0.05)\Delta v_i^d+(\frac{\beta_{i+1}-1}{\alpha_{i+1}}-0.05)\Delta v_{i+1}^d\\
    \Delta v_{i,n+1}=&\beta_i \Delta v_{i,n} -(\beta_i-1) \Delta v_{i,n}^d, \label{eq:v_discrete}
\end{align}
where 
\begin{align}
    \alpha_{i,n}&=c_6+2c_7(\bar{v}_{i,n}-\bar{v}_{i,n}^d),\label{eq:alpha}\\
    \beta_{i,n}&=e^{0.05\alpha_{i,n}}\label{eq:beta}.
\end{align}

Vectorizing these discrete dynamics for the state vector $x_n=[\Delta d_{1,n}~\cdots~\Delta d_{\kappa-1,n}~\Delta v_{1,n}~\cdots~\Delta v_{\kappa,n}]^\T$ and inputs $u_{i,n}=\Delta v_{i,n}^d$ results in an LTV system satisfying \eqref{eq:state_update} where 
\begin{align}
    A_n&=\nonumber\\
    &\left[\begin{array}{c;{2pt/2pt}c}
    I_{\kappa-1} & \begin{matrix}\frac{\beta_{1,n}-1}{\alpha_{1,n}}&-\frac{\beta_{2,n}-1}{\alpha_{2,n}}&&\\&\ddots&\ddots&\\&&\frac{\beta_{\kappa-1,n}-1}{\alpha_{\kappa-1,n}}&-\frac{\beta_{\kappa,n}-1}{\alpha_{\kappa,n}}\end{matrix}\\\hdashline[2pt/2pt]
    0_{\kappa\times\kappa-1}&\text{diag}(\beta_{1,n},\hdots,\beta_{\kappa,n})
    \end{array}\right], \label{eq:long_A}
\end{align}
\begin{align}
    B_{i,n}=\begin{cases}
    \begin{bmatrix}\frac{1}{20}-\frac{\beta_{1,n}-1}{\alpha_{1,n}}\\0_{\kappa-2\times1}\\1-\beta_{1,n}\\0_{\kappa-1\times1}\end{bmatrix} & i=1\\\noalign{\vskip3pt}
    \begin{bmatrix}0_{i-2\times1}\\\frac{\beta_{i,n}-1}{\alpha_{i,n}}-\frac{1}{20}\\ \frac{1}{20}-\frac{\beta_{i,n}-1}{\alpha_{i,n}}\\0_{\kappa-2\times1}\\1-\beta_{i,n}\\0_{ \kappa-i\times1}\end{bmatrix}& i\neq1.
    \end{cases} \label{eq:long_B}.
\end{align}

The measurement model satisfies \eqref{eq:measurement_model} where the matrix $C$ takes the form 
\begin{align}
    C_{i}=\begin{cases}
    \begin{bmatrix}
    0_{1\times \kappa-1}&1&0_{1\times\kappa-1}
    \end{bmatrix}
    &i=1\\\noalign{\vskip3pt}
    \left[\begin{array}{c;{2pt/2pt}c;{2pt/2pt}c;{2pt/2pt}c;{2pt/2pt}c}
    0_{2\times i-2}&\begin{matrix}1\\0\end{matrix}&0_{2\times\kappa-1}&\begin{matrix}0\\1\end{matrix}&0_{2\times\kappa-i}
    \end{array}\right]
    &i\neq1.
    \end{cases} \label{eq:C1}
\end{align}
for the level 3 and level 1 controllers defined in Section \ref{sec:design} and takes the form
\begin{align}
    C_{i}=\begin{cases}
    \left[\begin{array}{c;{2pt/2pt}c;{2pt/2pt}c}
    \begin{matrix}
    1&0&\hdots&0\\
    \vdots&\ddots&\ddots&\vdots\\
    1&\hdots&1&0\\
    1&\hdots&1&1
    \end{matrix} & 
    0_{\kappa-1\times1}&0_{\kappa-1}\\\hdashline[2pt/2pt]
    0_{1\times\kappa-1}&1&0_{1\times\kappa-1}
    \end{array}\right]
    &i=1\\\noalign{\vskip3pt}
    \left[\begin{array}{c;{2pt/2pt}c;{2pt/2pt}c;{2pt/2pt}c;{2pt/2pt}c}
    0_{2\times i-2}&\begin{matrix}1\\0\end{matrix}&0_{2\times\kappa-1}&\begin{matrix}0\\1\end{matrix}&0_{2\times\kappa-i}
    \end{array}\right]
    &i\neq1.
    \end{cases}\label{eq:C2}
\end{align}
for the level 2 controller defined in Section \ref{sec:design}.

\subsection{Closed Loop System Matrices}\label{sec:cl_mat}
The matrices for the closed loop dynamics in \eqref{eq:closed_loop} satisfy
\begin{align}
    \bar{A}_n&=\left[\begin{array}{c;{2pt/2pt}c}
    A_n & 
    B_{1,n}K_{1,n}~\cdots~B_{\kappa,n}K_{\kappa,n}\\\hdashline[2pt/2pt]
    \begin{matrix}
    -\sum_{j\in H_1} L_{1,j}C_j\\
    \vdots\\
    -\sum_{j\in H_\kappa}L_{\kappa,j}C_j
    \end{matrix}&
    \text{diag}(M_{1,n},\hdots,M_{\kappa,n})
    \end{array}\right],\\
    \bar{B}_n&=\left[\begin{array}{c}
    \begin{matrix}B_{1,n} & \cdots & B_{\kappa,n}\end{matrix}\\\hdashline[2pt/2pt]
    \text{diag}(N_1B_{1,n},\hdots,N_\kappa B_{\kappa,n})
    \end{array}\right],\\
    \bar{L}_n&=\left[\begin{array}{c}0_{p\times\left(\kappa\sum_{j=1}^\kappa r_j\right)}\\\hdashline[2pt/2pt]
    \text{diag}(L_{1,n},\hdots,L_{\kappa,n})\end{array}\right],\\
    L_{i,n}&=\begin{bmatrix}L_{(i,1),n}&\cdots&L_{(i,\kappa),n}\end{bmatrix}\label{eq:L_concat}.
\end{align}
In \eqref{eq:L_concat}, when $(i,j)\notin H$, $L_{(i,j),n}$ is replaced by $0_{p_i\times m_i}$.

\subsection{Lateral Controller and Observer}\label{sec:lat_control}
For lane keeping, a lateral controller is introduced which operates independently of the longitudinal controller (i.e. each vehicle runs the same lateral controller and observer at all times).
The feedback law follows
\begin{align}
    \delta_{i,n}=\begin{bmatrix} -0.25 & -1\end{bmatrix}\begin{bmatrix}\Delta \hat{\text{lat}}_{i,n} \\ \Delta \hat{\psi}_{i,n} \end{bmatrix} \label{eq:lat_controller}.
\end{align}
Furthermore, the observer follows
\begin{align}
    \begin{bmatrix}
        \Delta \hat{\text{lat}}_{i,n+1} \\ 
        \Delta \hat{\psi}_{i,n+1} 
    \end{bmatrix}   &=  
    \Bigg(
        \begin{bmatrix}
            1   &   \frac{\bar{v}_{i,n}}{20} \\
            0   &   1
        \end{bmatrix}+
        \begin{bmatrix}
            0.3 & \frac{\hat{v}_{i,n}}{20} \\
            0 & 0.2
        \end{bmatrix}
    \Bigg)
    \begin{bmatrix}
        \Delta \hat{\text{lat}}_{i,n} \\ 
        \Delta \hat{\psi}_{i,n} 
    \end{bmatrix} + \nonumber \\
    +&\frac{c_1 \bar{v}_{i,n}}
    {\cos^2(c_1\bar{\delta}_{i,n}+c_2)(c_3+c_4\bar{v}_{i,n}^2)}\times\nonumber\\
    \times&\begin{bmatrix}
        \left(\frac{(c_8+c_9\bar{v}_{i,n}^2)}{20}+\frac{\bar{v}_{i,n}}{800}\right)\\
        \frac{1b_{i,n}}{20}
    \end{bmatrix}
    \Delta \delta_{i,n} \nonumber \\
    -& 
    \begin{bmatrix}
        0.3 & \frac{\hat{v}_{i,n}}{20} \\
        0 & 0.2
    \end{bmatrix}
    \Delta \text{y-lat}_{i,n},
\end{align}
where $\Delta \text{y-lat}_{i,n}$ is the measurement of the lateral and heading error.

\begin{table*}[t]
    \centering
    \begin{tabular}{|c|c|c|}\hline
        Controller & $N_i$ & $M_{i,n}$ \\\hline\hline
        Level 3 & \parbox{2.8in}{\begin{align}N_i=I_{2\kappa-1}\end{align}} & \parbox{3in}{\begin{align}M_{i,n}=A_n+\sum_{j=1}^\kappa \left(B_{j,n}K_{j,n}+L_{(i,j),n}C_{j}\right)\label{eq:fully_M}\end{align}} \\\hline
    Level 2 & \parbox{2.8in}{\begin{align}
    &N_ix_n=\nonumber\\&=
    \begin{cases}
        \begin{bmatrix}
            d_{1,n} & v_{1,n} & v_{2,n}
        \end{bmatrix}^\T & i=1\\\noalign{\vskip3pt}
        \begin{bmatrix}
            d_{1,n} & d_{2,n} & v_{1,n} & v_{2,n} & v_{3,n}
        \end{bmatrix}^\T & i=2\\\noalign{\vskip3pt}
        \begin{bmatrix} 
            \underline{d}_{\kappa,n} & d_{\kappa-1,n}&v_{1,n} & v_{\kappa-1,n} & v_{\kappa,n}
        \end{bmatrix}^\T & i=\kappa\\\noalign{\vskip3pt}
        \Big[\begin{matrix}
            \underline{d}_{i,n} & d_{i-1,n} & d_{i,n}
        \end{matrix}&~\\
        \qquad\qquad\begin{matrix} v_{1,n} & v_{i-1,n} & v_{i,n} & v_{i+1,n}
        \end{matrix}\Big]^\T & \text{o/w}
    \end{cases}
\end{align}} & \parbox{3.2in}{
\begin{align}
    &M_{i,n}=N_iB_{i,n}K_{i,n}+\nonumber\\
    &+\begin{cases}
    \left[\begin{array}{c;{2pt/2pt}c} 0.5 & \begin{matrix}\sigma_{1,n}& -\sigma_{2,n}\end{matrix}\\\hdashline[2pt/2pt]0_{2\times1} & \text{diag}(\theta_{1,n},\theta_{2,n})\end{array}\right] & i=1\\\noalign{\vskip3pt}
    \left[\begin{array}{c;{2pt/2pt}c} 0.5I_2 & \begin{matrix}\sigma_{1,n}& -\sigma_{2,n} & 0\\0 & \sigma_{2,n} & -\sigma_{3,n}\end{matrix}\\\hdashline[2pt/2pt]0_{3\times2} & \text{diag}(\theta_{1,n},\theta_{2,n},\theta_{3,n})\end{array}\right] & i=2\\\noalign{\vskip3pt}
    \left[\begin{array}{c;{2pt/2pt}c} 0.5I_2 & \begin{matrix}\sigma_{1,n}& 0 & -\sigma_{\kappa,n}\\0 & \sigma_{\kappa-1,n} & -\sigma_{\kappa,n}\end{matrix}\\\hdashline[2pt/2pt]0_{3\times2} & \text{diag}(\theta_{1,n},\theta_{\kappa-1,n},\theta_{\kappa,n})\end{array}\right]& i=\kappa\\\noalign{\vskip3pt}
    \left[\begin{array}{c;{2pt/2pt}c} 0.5I_3 & \begin{matrix} \sigma_{1,n}& 0 & -\sigma_{i,n} & 0\\ 0 & \sigma_{i-1,n} & -\sigma_{i,n} & 0\\ 0 & 0 & \sigma_{i,n} & -\sigma_{i+1,n} \end{matrix}\\\hdashline[2pt/2pt] 0_{4\times3} & \text{diag}(\theta_{1,n},\theta_{i-1,n},\theta_{i,n},\theta_{i+1,n}) \end{array}\right]& \text{o/w}
    \end{cases}\label{eq:leader_M}
\end{align}} 
\\\hline
Level 1 & \parbox{2.8in}{\begin{align}
    N_ix_{i,n}=\begin{cases}
    \begin{bmatrix}v_{1,n}\end{bmatrix} & i=1\\\noalign{\vskip3pt}
    \begin{bmatrix}d_{i-1,n}\\v_{i-1,n}\\v_{i,n}\end{bmatrix} & i\neq1
    \end{cases}
\end{align}} & \parbox{3in}{\begin{align}
    M_{i,n}&=N_iB_{i,n}K_{i,n}+\nonumber\\
    &+\begin{cases}
    \theta_{1,n} & i=1\\
    \begin{bmatrix}0.5 & \frac{\beta_{i-1,n}-1}{\alpha_{i-1,n}} & -\sigma_{i,n}\\
    -1.2 & \beta_{i-1,n} & 0\\0 & 0 & \theta_{i,n}\end{bmatrix} & i\neq1
    \end{cases}\label{eq:acc_M}
\end{align}}
\\\hline
    \end{tabular}
    \caption{Linear transform and observer system matrices for each controller}
    \label{tab:obs_table1}
\end{table*}

\subsection{Sufficient conditions for Correlation}\label{sec:corr_prop}
The following proposition provides a sufficient condition for a non-zero correlation.
\begin{prop} 
Consider a closed loop LTV system satisfying \eqref{eq:closed_loop}. If for some $\rho_{(i,j)}\in\mathbb{N}$
\begin{align}
    &\begin{bmatrix}
    W_{(i,j)}C_{j} & 0_{m_i \times t}
    \end{bmatrix}
    \bar{A}_{n-1}\cdots
    \bar{A}_{n-\rho_{(i,j)}}
    \bar{B}_{n-\rho{(i,j)}-1}\times\nonumber\\
    &~\times
    \begin{bmatrix}
    0_{q_i \times \left(p+\sum_{j=1}^{i-1}p_j\right)} & 
    I_{q_i} & 
    0_{q_i\times \left(\sum_{j=i+1}^\kappa p_j\right)}
    \end{bmatrix}^\T
    \neq0_{p_{(i,j)}\times q_i}\label{eq:single_step_corr_cond}
\end{align}
then 
\begin{align}
    \mathds{E}\left[W_{(i,j)}s_{(i,j),n}e_{i,n-\rho_{(i,j)}-1}^\T\right]\neq 0_{p_{(i,j)}\times q_i}\label{eq:single_step_corr}
\end{align}
\end{prop}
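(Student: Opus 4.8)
The plan is to exploit the linearity of the closed-loop recursion \eqref{eq:closed_loop} together with the mutual independence of the watermark, process noise, and measurement noise. First I would invoke the no-attack hypothesis so that $s_{(i,j),n}=y_{j,n}=C_j x_n+z_{j,n}$, and write $x_n=[\,I_p~~0\,]\bar{x}_n$ to extract the plant state from the augmented state $\bar{x}_n$. Substituting into \eqref{eq:single_step_corr} and using that $z_{j,n}$ is uncorrelated with the delayed watermark $e_{i,n-\rho_{(i,j)}-1}$, the measurement-noise contribution vanishes, leaving $\mathds{E}[W_{(i,j)}s_{(i,j),n}e_{i,n-\rho_{(i,j)}-1}^\T]=W_{(i,j)}C_j[\,I_p~~0\,]\,\mathds{E}[\bar{x}_n e_{i,n-\rho_{(i,j)}-1}^\T]$.

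Next I would unroll \eqref{eq:closed_loop} backward by $\rho_{(i,j)}+1$ steps, expressing $\bar{x}_n$ as $\bar{A}_{n-1}\cdots\bar{A}_{n-\rho_{(i,j)}-1}\bar{x}_{n-\rho_{(i,j)}-1}$ plus the watermark injections $\bar{A}_{n-1}\cdots\bar{A}_{n-k}\bar{B}_{n-k-1}e_{n-k-1}$ and the accumulated process- and measurement-noise terms. The crucial observation is that every summand except the one carrying $e_{n-\rho_{(i,j)}-1}$ is independent of $e_{i,n-\rho_{(i,j)}-1}$: the state $\bar{x}_{n-\rho_{(i,j)}-1}$ depends only on data through step $n-\rho_{(i,j)}-2$, the later watermarks $e_{n-\rho_{(i,j)}},\dots,e_{n-1}$ are independent by construction, and all noise is independent of the watermark. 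Since each such factor is zero-mean, these contributions drop out of the expectation, leaving $\mathds{E}[\bar{x}_n e_{i,n-\rho_{(i,j)}-1}^\T]=\bar{A}_{n-1}\cdots\bar{A}_{n-\rho_{(i,j)}}\bar{B}_{n-\rho_{(i,j)}-1}\,\mathds{E}[e_{n-\rho_{(i,j)}-1}e_{i,n-\rho_{(i,j)}-1}^\T]$.

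I would then evaluate the remaining watermark cross-covariance. Because the stacked watermark has block-diagonal covariance $\Sigma_e=\text{diag}(\Sigma_{e_1},\dots,\Sigma_{e_\kappa})$, the vehicles' watermarks are mutually independent, so $\mathds{E}[e_{n-\rho_{(i,j)}-1}e_{i,n-\rho_{(i,j)}-1}^\T]$ equals the selection matrix that is zero except for an identity block in the coordinates of vehicle $i$, right-multiplied by $\Sigma_{e_i}$. Assembling the factors reproduces exactly the matrix on the left-hand side of \eqref{eq:single_step_corr_cond}, post-multiplied by $\Sigma_{e_i}$.

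The final step, and the only one needing care, is concluding non-zeroness. Each $\Sigma_{e_i}$ is the covariance of a non-degenerate Gaussian watermark and is therefore positive definite, hence invertible; consequently $M\Sigma_{e_i}=0_{p_{(i,j)}\times q_i}$ forces $M=0_{p_{(i,j)}\times q_i}$, so the correlation \eqref{eq:single_step_corr} is non-zero exactly when the matrix factor in \eqref{eq:single_step_corr_cond} is non-zero, which is the hypothesis. I expect the main obstacle to be the bookkeeping of the unrolling step --- correctly indexing the product $\bar{A}_{n-1}\cdots\bar{A}_{n-\rho_{(i,j)}}$ and confirming that $\bar{x}_{n-\rho_{(i,j)}-1}$ carries no dependence on the delayed watermark --- rather than any deep analytic difficulty; the invertibility of $\Sigma_{e_i}$ then finishes the argument.
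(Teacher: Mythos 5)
Your proposal is correct and follows essentially the same route as the paper's proof: expand $s_{(i,j),n}$ via the measurement model, unroll the closed-loop recursion \eqref{eq:closed_loop} back $\rho_{(i,j)}+1$ steps, drop all zero-mean terms independent of the delayed watermark, and recover the matrix in \eqref{eq:single_step_corr_cond} post-multiplied by $\Sigma_{e_i}$, whose full rank gives the conclusion. Your treatment is in fact slightly more explicit than the paper's, which compresses the independence bookkeeping into a single ``without loss of generality'' remark, but the argument is identical in substance.
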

\begin{proof}
Due to the assumption that the watermark is mutually independent and zero mean, only terms that are a function of the watermark at that particular step have non-zero expectation. For ease of notation, these terms are removed without loss of generality in this proof.
Expanding the communicated measurement on the left side of \eqref{eq:single_step_corr} first using \eqref{eq:com_measurement} then \eqref{eq:closed_loop} results in 
\begin{align}
    \mathds{E}&\left[W_{(i,j)}s_{(i,j),n}e_{i,n-\rho_{(i,j)}-1}^\T\right]=\nonumber\\
    &=\mathds{E}\left[W_{(i,j)}C_{j}x_{n}e_{i,n-\rho_{(i,j)}-1}^\T\right]=\nonumber\\
    &=\mathds{E}\Big[\begin{bmatrix}
    W_{(i,j)}C_{j} & 0_{m_i \times t}
    \end{bmatrix}\bar{A}_{n-1}\cdots\bar{A}_{n-\rho_{(i,j)}}\times\nonumber\\
    &~~\quad\times\bar{B}_{n-\rho_{(i,j)}-1}e_{n-\rho_{(i,j)}-1}e_{i,n-\rho_{(i,j)}-1}^\T\Big]=\nonumber\\
    &=\begin{bmatrix}
    W_{(i,j)}C_{j} & 0_{m_i \times t}
    \end{bmatrix}
    \bar{A}_{n-1}\cdots
    \bar{A}_{n-\rho_{(i,j)}}
    \bar{B}_{n-\rho_{(i,j)}-1}\times\nonumber\\
    &~~\quad\times
    \begin{bmatrix}
    0_{q_i \times \left(p+\sum_{j=1}^{i-1}p_j\right)} & 
    I_{q_i} & 
    0_{q_i\times \left(\sum_{j=i+1}^\kappa p_j\right)}
    \end{bmatrix}^\T\Sigma_{e_i}.
\end{align}
Since $\Sigma_{e_i}$ is full rank and \eqref{eq:single_step_corr_cond} holds, \eqref{eq:single_step_corr} holds as well.
\end{proof}



\subsection{Longitudinal Observer Matrices}\label{sec:obs_mat}
\begin{table*}[!htb]
    \centering
    \begin{tabular}{|c|c|c|}\hline
        Controller & $L_{(i,j),n}$ & $W_{(i,j)},~U_{(i,j)}$ \\\hline\hline
        Level 3 &  
        \parbox{3.2in}{\begin{align}
        L_{(i,j),n} = 
        \begin{cases}
            \begin{bmatrix}
                -0.05 & 0_{1 \times \kappa-2} & -0.1 & 0_{1 \times \kappa-1}
            \end{bmatrix}^{\T} & j = 1 \\\noalign{\vskip3pt}
            \Big[
                \begin{array}{c;{2pt/2pt}c;{2pt/2pt}c;{2pt/2pt}c}
                    0_{2\times \kappa-2} &
                    \begin{matrix}
                        -0.5 \\
                        0.05
                    \end{matrix} & 
                    0_{2\times\kappa-1} &
                    \begin{matrix} 
                        0 \\
                        -0.1 
                    \end{matrix}
                \end{array}
            \Big]^{\T} & j = \kappa\\\noalign{\vskip3pt}
            \Big[\begin{array}{c;{2pt/2pt}c;{2pt/2pt}c;{2pt/2pt}}
                    0_{2\times j-2} &
                    \begin{matrix}
                        -0.5 \\
                        0.05
                    \end{matrix} & 
                    \begin{matrix}
                        0 \\
                        -0.05
                    \end{matrix}           
                \end{array} & ~\\
                \quad \begin{array}{c;{2pt/2pt}c;{2pt/2pt}c}
                    0_{2\times\kappa-2} &
                    \begin{matrix} 
                        0 \\
                        -0.1 
                    \end{matrix} & 
                    0_{2\times\kappa-j}
                \end{array}
            \Big]^{\T} & \text{o/w}.
        \end{cases}\label{eq:fully_L}
    \end{align}} & \parbox{2.8in}{\begin{align}W_{(i,j)}=C_j,~U_{(i,j)}=I_{m_j}\end{align}} \\\hline
    Level 2 & \parbox{3.2in}{
    \begin{align}
    L_{(1,j),n}&=\begin{cases}
    \left[\begin{array}{c;{2pt/2pt}c}0_{3\times\kappa-1} & \begin{matrix}-0.05\\-0.1\\0\end{matrix}\end{array}\right] & j=1\\\noalign{\vskip3pt}
    \begin{bmatrix}-0.5 & 0 & 0\\0.05 & 0 & -0.1\end{bmatrix}^\T & j=2
    \end{cases}\label{eq:leader_L1}
\end{align}
for the first vehicle,
\begin{align}
    L_{(2,j),n}&=\begin{cases}
    \left[\begin{array}{c;{2pt/2pt}c;{2pt/2pt}c}\begin{matrix}-0.25\\0_{4\times1}\end{matrix}&0_{1\times\kappa-2} & \begin{matrix}-0.05\\0\\-0.1\\0_{2\times1}\end{matrix}\end{array}\right] & j=1\\\noalign{\vskip3pt}
    \begin{bmatrix}-0.25 & 0 & 0 & 0 & 0\\0.05 & -0.05 & 0 & -0.1 & 0\end{bmatrix}^\T & j=2\\\noalign{\vskip3pt}
    \begin{bmatrix}0 & -0.5 & 0 & 0 & 0\\0 & 0.05 & 0 & 0 & -0.1\end{bmatrix}^\T & j=3
    \end{cases}
\end{align}
for the second vehicle,
\begin{align}
    L_{(\kappa,j),n}&=\begin{cases}
    \left[\begin{array}{c;{2pt/2pt}c}0_{5\times\kappa-2} & \begin{matrix}-0.5 & -0.05\\0 & 0\\0 & 0.1\\0_{2\times1} & 0_{2\times1}\end{matrix}\end{array}\right] & j=1\\\noalign{\vskip3pt}
    \begin{bmatrix}0 & -0.05 & 0 & -0.1 & 0\end{bmatrix}^\T & j=\kappa-1\\\noalign{\vskip3pt}
    \begin{bmatrix}0 & -0.5 & 0 & 0 & 0\\ 0.05 & 0.05 & 0 & 0 & -0.1\end{bmatrix}^\T & j=\kappa
    \end{cases}
\end{align}
for the last vehicle, and
\begin{align}
    &L_{(i,j),n}=\nonumber\\
    &=\begin{cases}
    \left[\begin{array}{c;{2pt/2pt}c;{2pt/2pt}c;{2pt/2pt}c}0_{7\times i-2} & \begin{matrix}-0.5\\0_{6\times1}\end{matrix}&0_{7\times\kappa-i} & \begin{matrix}-0.05\\0\\0\\-0.1\\0_{3\times1}\end{matrix}\end{array}\right] & j=1\\\noalign{\vskip3pt}
    \begin{bmatrix}0 & -0.05 & 0 & 0 & -0.1 & 0 & 0 \end{bmatrix}^\T & j=i-1\\\noalign{\vskip3pt}
    \begin{bmatrix}0 &- 0.5 & 0 & 0 & 0 & 0 & 0\\ 0.05 & 0.05 & -0.05 & 0 & 0 & -0.1 & 0\end{bmatrix}^\T & j=i\\\noalign{\vskip3pt}
    \begin{bmatrix}0 & 0 & -0.5 & 0 & 0 & 0 & 0\\ 0 & 0 & 0.05 & 0 & 0 & 0 & -0.1\end{bmatrix}^\T & j=i+1\\
    \end{cases}\label{eq:leader_L4}
\end{align}
for the remaining vehicles i.e. $i\notin\{1,2,\kappa\}$.} & \parbox{2.8in}{\begin{align}
    &U_{(i,j)}N_{i}x_n=W_{(i,j)}C_{j}x_n=\nonumber\\
    &\quad\qquad=\begin{cases}
        \begin{bmatrix}v_{1,n}\end{bmatrix} & j=1,~i=1\\\noalign{\vskip3pt}
        \begin{bmatrix}\sum_{k=1}^{i-1}d_{k,n}&v_{1,n}\end{bmatrix}^\T & j=1,~i\neq1\\\noalign{\vskip3pt}
        \begin{bmatrix}v_{j,n}\end{bmatrix} & j\neq1,~i=j+1\\\noalign{\vskip3pt}
        \begin{bmatrix}d_{j-1,n}&v_j\end{bmatrix}^\T & j\neq1,~i\neq j+1
    \end{cases}.
\end{align}}\\\hline
Level 1 & \parbox{3.2in}{\begin{align}
    L_{(i,i),n} = 
    \begin{cases}
        -0.1 & i = 1 \\
        \begin{bmatrix}
            -0.5 & 0.05 \\
            -1.2 & 0\\
            0 & -0.1
        \end{bmatrix}
        & i \neq 1
    \end{cases}\label{eq:acc_L}
\end{align}} & N/A\\\hline
    \end{tabular}
    \caption{Observer gain and measurement transform matrices}
    \label{tab:obs_table2}
\end{table*}

This section provides the observer matrices for all three control levels in Tables \ref{tab:obs_table1} and \ref{tab:obs_table2}. To ease notation we define the following
\begin{align}
    \theta_{i,n}&=\beta_{i,n}-0.1,\label{eq:theta}\\
    \sigma_{i,n}&=\frac{\beta_{i,n}-1}{\alpha_{i,n}}-0.05.\label{eq:sigma}
\end{align}

\subsection{Simulation results}
\label{sec:sim_results_append}

The results of the aggressive attack on the level 3 controller and the replay attack on the level 2 controller, shown in Figures \ref{fig:results_fc_zero} and \ref{fig:results_lf_replay} respectively.
\clearpage 
\begin{figure*}[!h]
    \centering
    \includegraphics[trim={0 1.3cm 0 1.5cm},width=\textwidth]{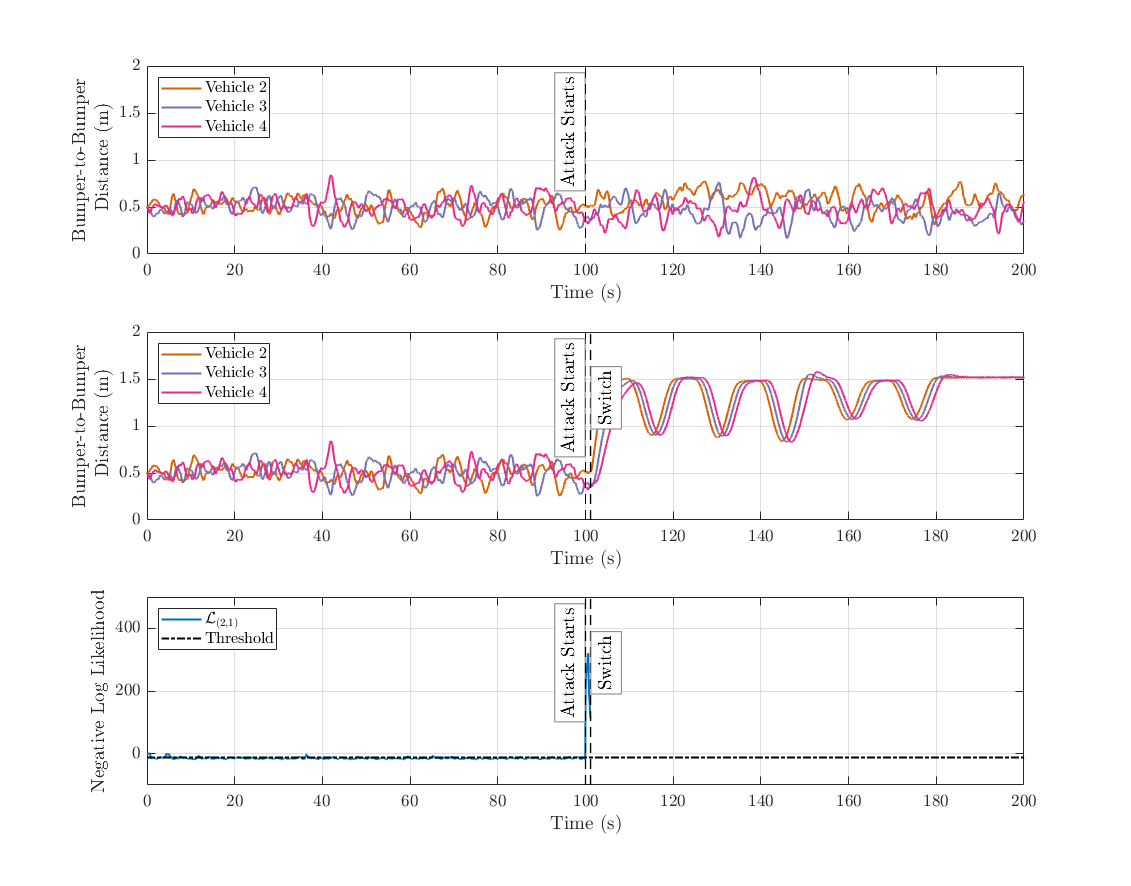}
    \caption{
    (top) Performance of the level 3 controller after an aggressive attack without switching to the level 1 controller.
    However, it completes the trajectory without crashing.
    (middle) Platoon switching to level 1 controller after detecting the attack and completing the entire trajectory. 
    (bottom) Negative log likelihood of channel which detected the replay attack first.
    }
    \label{fig:results_fc_zero}
\end{figure*}

\begin{figure*}[h]
    \centering
    \includegraphics[trim={0 1.3cm 0 1.5cm},width=\textwidth]{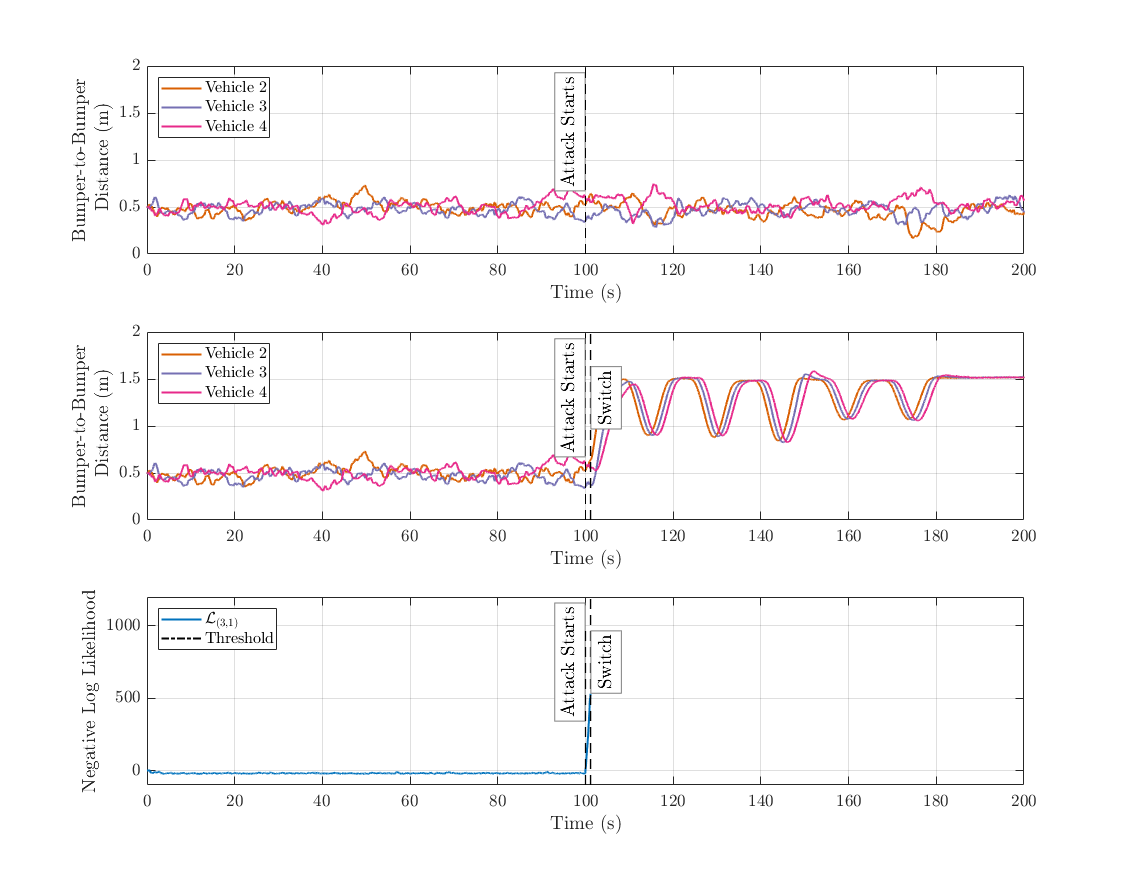}
    \caption{
    (top) Performance of the level 2 controller after a replay attack without switching to the level 1 controller.
    However, it completes the trajectory without crashing.
    (middle) Platoon switching to level 1 controller after detecting the attack and completing the entire trajectory. 
    (bottom) Negative log likelihood of channel which detected the replay attack first.
    }
    \label{fig:results_lf_replay}
\end{figure*}

\end{document}